\documentclass[11pt,psfig]{article}

\setlength{\oddsidemargin}{0in}
\setlength{\evensidemargin}{0in}
\setlength{\textwidth}{6.5in}
\setlength{\textheight}{9.0in}
\setlength{\topmargin}{-0.5in}
%\numberwithin{equation}{section}
\usepackage{amssymb}
\usepackage{pict2e}
%% The amsthm package provides extended theorem environments
\usepackage{amsthm}
\usepackage{fleqn}
\usepackage{graphicx}
\usepackage{epstopdf}
\usepackage{indentfirst}
\usepackage{amsfonts}
\usepackage[numbers]{natbib}
\usepackage[colorlinks,citecolor=blue,urlcolor=blue]{hyperref}
\usepackage{amsmath}
\usepackage{amstext}
\usepackage{enumitem}
\newtheorem{theorem}{Theorem}[section]
\newtheorem{lemma}[theorem]{Lemma}
\newtheorem{proposition}[theorem]{Proposition}
\newtheorem{corollary}[theorem]{Corollary}
\newtheorem{remark}[theorem]{Remark}

\newtheorem{assumption}[theorem]{Assumption}
\newtheorem{example}[theorem]{Example}
\newcounter{neweqn}

\newcommand{\beq}[1]{\begin{equation} \refstepcounter{neweqn} \label{#1}}
\newcommand{\eeq}{\end{equation}}
\newcommand{\bed}{\begin{displaymath}}
\newcommand{\eed}{\end{displaymath}}
\newcommand{\bedd}{\bed\begin{array}{l}}
\newcommand{\eedd}{\end{array}\eed}

\newcommand{\al}{\alpha}

\def\({\left(}
\def\){\right)}

\begin{document}

\title{Two Equivalent Families of Linear Fully Coupled Forward Backward Stochastic Differential Equations}

\author{Ruyi Liu\thanks{School of Mathematics and Statistics, University of Sydney, NSW 2006, Australia. Email: ruyi.liu@sydney.edu.au}
\and
Zhen Wu\thanks{School of Mathematics,
Shandong University, Jinan 250100,
People's Republic of China
wuzhen@sdu.edu.cn.}
\and
Detao Zhang\thanks{School of Economics, Shandong University, Jinan 250100, People’s Republic of China, zhangdetao@sdu.edu.cn}
}

\maketitle

\date{}

\begin{abstract}
In this paper, we investigate two families of fully coupled linear Forward-Backward Stochastic Differential Equations (FBSDE). Within these families, one could get the same well-posedness of FBSDEs with totally different structures. The first family of FBSDEs are proved to be equivalent with respect to the Unified Approach. Thus one could get the well-posedness of the whole family if one member exists a unique solution. Another equivalent family of FBSDEs are investigated by introducing a linear transformation method. By reason of the fully coupling structure between the forward and backward equations, it leads to a highly interdependence in solutions. We are able to lower the coupling of FBSDEs, by virtue of the idea of transformation, without losing the well-posedness. Moreover, owing to the non-degeneracy of the transformation matrix, the solution to original FBSDE is totally determined by solutions of FBSDE after transformation. In addition, an example of optimal Linear Quadratic (LQ) problem is presented to illustrate.

\bigskip\noindent
{\bf Key words:} Forward-Backward Stochastic Differential Equations, Unified Approach, Linear Transformation Method, Linear Quadratic Problem

\bigskip\noindent 
{\bf MSC (2010):}  39A50, 60G99, 93E20

\end{abstract}
\section{Introduction}
Suppose that $(\Omega, \mathcal{F}, \mathbb{P})$ be a filtered probability space on which $W =(W_t)_{t \geq 0}$ is defined a standard Brownian motion. We assume $\mathcal{F}=\{\mathcal{F}_t\}_{t\geq 0}$ to be the natural filtration generated by $W_t$ augmented by the null sets of $\mathbb{P}$. 
A general fully coupled FBSDE takes the form:

\beq{Nonl}
\begin{cases}
 dX(t)=b(t,X(t),Y(t),Z(t))dt+\sigma(t,X(t),Y(t),Z(t))dW(t),
 \\
 -dY(t)=f(t,X(t),Y(t),Z(t))dt-Z(t)dW(t),
 \\
 X_0=x \ \ \ Y(T)=h[X(T)],  \ \ \ \ \ \ \ \ \ \ \ \  0\leq t \leq T,
\end{cases}
\eeq
where $b, \sigma, f$ are all $\mathcal{F}$-progressively measurable processes defined on appropriate spaces with  $W(t)$ being a standard Brownian motion and $h(\cdot)$ is $\mathcal{F}_T$-measurable function. 

For technical clarity, we employ the following standard assumptions throughout the paper.  
\begin{assumption}
{\bf (H1)} The following condition holds for coefficient $b,\sigma, f, h$ on $[0,T]:$    
\bed
E\lbrace(\int_0^T[|b(t,0,0,0)|+|f(t,0,0,0)|]dt)^2+\int_0^T|\sigma(t,0,0,0)|^2 dt+|h(0)|^2 \rbrace < \infty
\eed
~\\
{\bf (H2) } The coefficients $b,\sigma, f$ and $ h$ are uniformly Lipschitz continuous with respect to $(x,y,z)$  and $x$ respectively with a Lipschitz constant $L> 0$.
\end{assumption}

To get the well-posdeness of nonlinear FBSDE~(\ref{Nonl}), the Method of Contract mapping was firstly introduced by Antonelli \cite{Antonelli1993} and later detailed by Pardoux and Tang \cite{Pardoux1999} to solve FBSDE with relative small time duration $T$. Afterwards, in \cite{Ma1994}, Ma, Protter and Yong  introduced the Four Step Scheme to handle the case of arbitrary duration $T$, which requires regularity assumption on coefficients in Markovian structure.

 To investigate FBSDE~(\ref{Nonl}) with non-Markovian coefficients, the Method of Continuation  was introduced (see Hu and Peng \cite{Hu-Peng1995}, Peng and Wu \cite{Peng-Wu1999}, Yong \cite{Yong1997Finding}) into the literature. But as a trade off, it required a Monotonicity conditions as the follows: 
\begin{assumption}
 {\bf (H3)}
Assume that $x,y,z$ are same dimension. Denoting $\theta_i=(x_i, y_i, z_i), i=1,2$ and $A(t,\theta_i)=\left(                 %左括号
  \begin{array}{c}   %该矩阵一共3列，每一列都居中放置
    -f \\
    b \\
    \sigma \\  %第一行元素
  \end{array}
\right)(t,\theta_i)$, there exist some constants $\beta_1, \beta_2, \mu >0$, for any $\hat{x}=x_1-x_2, \ \hat{y}=y_1-y_2, \ \hat{z}=z_1-z_2$, such that  
\beq{Mc}
\begin{cases}
\langle A(t,\theta_1)-A(t,\theta_2), \theta_1-\theta_2 \rangle \ \leq  -\beta_1|\hat{x}|^2- \beta_2(|\hat{y}|^2+|\hat{z}^2|) ,
\\~\\
\langle h(x_1)-h(x_2), x_1-x_2 \rangle \geq \mu |\hat{x}|^2.
\end{cases}
\eeq
\end{assumption}
Recently, a Unified Approach was introduced by Ma, Wu, Zhang and Zhang \cite{Ma2015On} to investigate nonlinear FBSDE. This method constructed a unified scheme which aims to summarize all existing methodologies in the literature, and provided a series of sufficient and necessary conditions to get the well-posedness of (\ref{Nonl}).

The core to the Unified Approach, analogue to the Four Step Scheme, is to find a decoupling field $u(t,\cdot)$ such that $Y_t=u(t,X_t)$ on $[0,T]$. It will ultimately leads to the well-posedness of (\ref{Nonl}) that the decoupling field $u(t,\cdot)$ is uniformly Lipschitz in its spatial variable. And proving $u$ being uniformly Lipschitz continuous amounts to finding solutions to the following "variational FBSDE":
\beq{VF}
\begin{cases}
\nabla X(t)= & 1+\int_{0}^{t}\left[b_{1} \nabla X(s)+b_{2} \nabla Y(s)+b_{3} \nabla Z(s)\right] d s +\int_{0}^{t}\left[\sigma_{1} \nabla X(s)+\sigma_{2} \nabla Y(s)+\sigma_{3} \nabla Z(s)\right] d W(s) , \\~\\
\nabla Y(t)= & h \nabla X(T)+\int_{t}^{T}\left[f_{1} \nabla X(s)+f_{2} \nabla Y(s)+f_{3} \nabla Z(s)\right] d s -\int_{t}^{T} \nabla Z(s) d W(s), \ \ \ \ \ t \in[0, T],
\end{cases} 
\eeq
where $\nabla \Theta \triangleq \frac{\Theta^{1}-\Theta^{2}}{x_{1}-x_{2}} $ denotes the derivative of $\Theta=X, Y, Z$ with respect to the initial value $x$. Note that (\ref{VF}) is a linear fully coupled FBSDE where the coefficients $b_i, \sigma_i, f_i, i=1,2 ,3$ are bounded in consequence of the Lipschitz condition (H2). 

A simple example, also the motivation for this paper, often appeared in the optimal investment problem and stochastic control problem is of the following form:
\beq{Ex3}
\begin{cases}
 X(t)=x+\int_0^t [a(s)X(s)+b(s)Y(s)+c(s)Z(s)]ds+\int_o^t[d(s)X(s)+e(s)Y(s)+f(s)Z(s)]dW(s),
 \\
 \\
 Y(t)=hX(T)+\int_t^T [m(s)X(s)+p(s)Y(s)+q(s)Z(s)ds]-\int_t^T Z(s)dW(s),  \ \ \ \ \ \ \ \ \ \ \ \  0\leq t \leq T,
\end{cases}
\eeq
where $a(\cdot), b(\cdot), c(\cdot), d(\cdot), e(\cdot), f(\cdot), m(\cdot), p(\cdot), q(\cdot)$ are bounded processes and $h$ is an $\mathcal{F}_{T}$ random variable.
({\ref{Ex3}}) is often derived from the Pontrygin's maximum principle when seeking the closed-loop optimal control of linear quadratic (LQ) optimal control problem. This kind of linear FBSDE are widely applied in many areas, such as ordinary differential equations \cite{2019Ahmad}  \cite{2021Verma}, stochastic control \cite{Yong1999Stochastic} and mathematical finance \cite{Wu2014SPA} \cite{Yu2013Equiva}. However, the well-posedness of (\ref{Ex3}) is not covered by any existing methods despite it is linear, homogeneous and bounded. As we will see in Section 5, the solvability of (\ref{Ex3}) will be a straight consequence of our results.

In this paper, we aim to get the well-posedness of this kind of Linear FBSDE. In Section 2, we formulate the linear fully coupled FBSDE and introduce the monotonicity conditions for it. In Section 3, we discuss a family of linear FBSDE which are proved to be equivalent with respect to the Unified Approach. In Section 4, we introduce a linear transformation method to study some FBSDE which could not be proved to be well posed by the existing methods. In Section 5, we employ the linear transformation to deal with the LQ stochastic control problem.

\section{Notations and Problem Formulation}
First, we introduce the following spaces:
$$L_{\mathcal{F}_{T}}^{\infty}(\Omega ; \mathbb{H})=\left\{\xi: \Omega \rightarrow \mathbb{H} \mid \xi \text { is } \mathcal{F}_{T} \text { -measurable bounded variable} \right\};$$
$$L_{\mathcal{F}}^{\infty}([0, T] ; \mathbb{H})=\{\varphi:[0, T] \times \Omega \rightarrow \mathbb{H} \mid \varphi(\cdot) \text { is } \mathbb{F} \text { -adapted, bounded processes}\}.$$

For the technical clarity, we only consider $\mathbb{H}=\mathcal{R}^1$ for simplification in what follows and multi-dimensional cases can be dealt with similarly. Let $T>0$ be a fixed time horizon and we consider the following linear fully coupled FBSDE:
\beq{LF}
\begin{cases}
 dX(t)=[b_1(t)X(t)+b_2(t)Y(t)+b_3(t)Z(t)]dt+[\sigma_1(t)X(t)+\sigma_2(t)Y(t)+\sigma_3(t)Z(t)]dW(t),
 \\
 -dY(t)=[f_1(t)X(t)+f_2(t)Y(t)+f_3(t)Z(t)]dt-Z(t)dW(t),
 \\
 X_0=x \ \ \ Y(T)=hX(T),  \ \ \ \ \ \ \ \ \ \ \ \  0\leq t \leq T,
\end{cases}
\eeq
where $b_i(\cdot), f_i(\cdot), \sigma_i(\cdot)\in L_{\mathcal{F}}^{\infty}([0, T] ; \mathbb{H}), i=1,2,3$ and $h \in L_{\mathcal{F}_{T}}^{\infty}(\Omega ; \mathbb{H})$ . 
\begin{remark}
{\rm
It is a straight consequence that $b_i, \sigma_i, f_i, i=1, 2, 3$ and $h$ are bounded since $b(t,\cdot,\cdot,\cdot)$,\ $\sigma(t,\cdot,\cdot,\cdot), \ f(t,\cdot,\cdot,\cdot), h(\cdot)$ in (\ref{Nonl}) hold for the Lipschitz conditions (H2) .}
\end{remark}
Here and after, we sometimes suppress t also for the processes $b_i, \sigma_i, f_i, i=1, 2, 3$, for the simplicity of
notations.
Motivated by Assumption 1.2, we have the following monotonicity conditions for linear FBSDE~(\ref{LF}):
\begin{lemma}(Monotonicity Conditions)

\emph{(i) For $\forall \ x, y, z \in R$ and fixed }$t$,
\bed
\left(                 %左括号
  \begin{array}{ccc}   %该矩阵一共3列，每一列都居中放置
    x & y & z \\  %第一行元素
  \end{array}
\right)
\left(                 %左括号
  \begin{array}{ccc}   %该矩阵一共3列，每一列都居中放置
    -f_1 & -f_2 & -f_3\\
    b_1 & b_2 & b_3  \\
    \sigma_1 & \sigma_2 & \sigma_3  \\  %第一行元素
  \end{array}
\right)
\left(                 %左括号
  \begin{array}{c}   %该矩阵一共3列，每一列都居中放置
    x \\
    y \\
    z \\  %第一行元素
  \end{array}
\right)
\leq -\beta_1{|x|}^2-\beta_2({|y|}^2+{|z|}^2),
\eed
\emph{where $\beta_1$ and $\beta_2$ are nonnegative constants. When $\beta_1 > 0, h > 0$, then $\beta_2 \geq 0$; When $\beta_2 > 0$, then $\beta_1 \geq 0, h \geq 0$.}\\

\emph{(ii) For $\forall \ x, y, z \in R$ and fixed }$t$,
\bed
\left(                 %左括号
  \begin{array}{ccc}   %该矩阵一共3列，每一列都居中放置
     x & y & z \\  %第一行元素
  \end{array}
\right)
\left(                 %左括号
  \begin{array}{ccc}   %该矩阵一共3列，每一列都居中放置
     -f_1 & -f_2 & -f_3\\
    b_1 & b_2 & b_3  \\
    \sigma_1 & \sigma_2 & \sigma_3  \\ %第一行元素
  \end{array}
\right)
\left(                 %左括号
  \begin{array}{c}   %该矩阵一共3列，每一列都居中放置
    x \\
    y \\
    z \\  %第一行元素
  \end{array}
\right)
\geq \beta_1{|x|}^2+\beta_2({|y|}^2+{|z|}^2),  
\eed
\emph{where $\beta_1$ and $\beta_2$ are nonnegative constants. When $\beta_1 > 0, h < 0$, then $\beta_2 \geq 0$; When $\beta_2 > 0$ , then $\beta_1 \geq 0, h \leq 0$.}
\end{lemma}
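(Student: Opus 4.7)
The plan is to treat this lemma as the direct specialization of the abstract monotonicity condition (H3) of Assumption~1.2 to the linear system~(\ref{LF}), together with an identification of the parameter regimes under which the Unified Approach of~\cite{Ma2015On} applies.

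The first step is purely algebraic. For (\ref{LF}), the map $\theta \mapsto A(t,\theta)$ appearing in (H3) is \emph{linear}, represented by the $3\times 3$ matrix $M(t)$ whose rows are $(-f_1,-f_2,-f_3)$, $(b_1,b_2,b_3)$, $(\sigma_1,\sigma_2,\sigma_3)$. Thus $A(t,\theta_1)-A(t,\theta_2)=M(t)(\theta_1-\theta_2)$, and the inner product appearing in~(\ref{Mc}) equals $(\hat x,\hat y,\hat z)\,M(t)\,(\hat x,\hat y,\hat z)^\top$, where $\hat\theta=\theta_1-\theta_2$. Since $(\theta_1,\theta_2)$ are arbitrary, $(\hat x,\hat y,\hat z)$ ranges over all of $\mathbb{R}^{3}$; relabelling it as $(x,y,z)$ reproduces exactly the matrix inequality of part~(i). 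The terminal condition $\langle h(x_1)-h(x_2),x_1-x_2\rangle\ge\mu|\hat x|^{2}$ collapses, by the linearity of $h(x)=hx$, to $h|\hat x|^{2}\ge\mu|\hat x|^{2}$, which forces $h>0$ (indeed $h\ge\mu$). Part~(ii) is obtained by the symmetric substitution $\theta\mapsto-\theta$ in (H3) (equivalently, changing the signs of $\beta_1,\beta_2,\mu$), which flips the bilinear bound and the terminal sign so that $h<0$.

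The main substantive content of the lemma is the articulation of the parameter restrictions on $\beta_1,\beta_2,h$. These identify the two non-degenerate regimes under which the Unified Approach closes: either strong forward monotonicity ($\beta_1>0$) combined with $h>0$ and admissible $\beta_2\ge 0$, or strong backward monotonicity ($\beta_2>0$) combined with $h\ge 0$ and admissible $\beta_1\ge 0$ (and mirror statements for part~(ii)). I expect the main obstacle to be conceptual rather than algebraic: one must verify that the interior bilinear bound is \emph{consistent} with the stated sign of the terminal coefficient, so that neither regime is vacuous. This can be checked by exhibiting concrete weakly-coupled templates (for instance purely diagonal choices of $b_i,\sigma_i,f_i$ with small off-diagonal perturbations) that fall inside each regime, mirroring the compatibility conditions already in play for the continuation method of~\cite{Hu-Peng1995,Peng-Wu1999}; once this is done, the lemma reduces to a direct restatement of (H3) in the linear setting.
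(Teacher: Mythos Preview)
Your first paragraph is exactly right, and it is essentially all that the paper does: Lemma~2.2 is stated without proof, introduced only by the sentence ``Motivated by Assumption~1.2, we have the following monotonicity conditions for linear FBSDE~(\ref{LF}).'' In other words, the lemma is not a result to be established but a \emph{definition} of what the monotonicity condition (H3) becomes when specialized to the linear system, obtained by the linear-algebra computation you describe.

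Where you overreach is in the second and third paragraphs. The clauses ``When $\beta_1>0,\ h>0$, then $\beta_2\ge 0$; when $\beta_2>0$, then $\beta_1\ge 0,\ h\ge 0$'' are not conclusions awaiting proof, nor consistency checks to be verified by exhibiting templates; they are simply the standard Peng--Wu parameter regimes (cf.~\cite{Peng-Wu1999}) written into the hypothesis, exactly as in Assumption~1.2. There is no ``main substantive content'' beyond the algebraic rewriting. You also misattribute the lemma to the Unified Approach of~\cite{Ma2015On}: these are the monotonicity conditions underpinning the \emph{Method of Continuation} (\cite{Hu-Peng1995,Peng-Wu1999,Yong1997Finding}), which the paper treats as a separate tool from the Unified Approach (see Remark~3.10 and Theorem~3.11, where the two are explicitly contrasted). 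So your plan is correct in substance but should be trimmed to its first paragraph; the rest introduces confusion rather than content.
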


\section{Equivalent Coefficient Matrix for Unified Approach}
 
The Unified Approach is one of the principal methods to solve the well-posedness of (\ref{LF}). By virtue of the variational FBSDE, Ma et al. \cite{Ma2015On} derived the dominating ODE of (\ref{LF}) which takes the following form:
\beq{Dode}
\mathbf{y}_{t}=h+\int_{t}^{T} F\left(s,\mathbf{y}_{s}\right) ds,  \ \ \ \  \ 0 \leq s \leq T,
\eeq
where
\beq{F}
F(s,y) \triangleq f_{1}(s)+f_{2}(s) y+y\left[b_{1}(s)+b_{2}(s) y\right]+\frac{\left(f_{3}(s)+b_{3}(s) y\right) y\left(\sigma_{1}(s)+\sigma_{2}(s) y\right)}{1-\sigma_{3}(s) y} .
\eeq
And the well-posedness of (\ref{LF}) is a straightforward consequence of the boundary solutions to (\ref{F}) according to the following lemma.
\begin{lemma}(Ma, Wu, Zhang, Zhang \cite{Ma2015On})
Linear FBSDE~(\ref{LF}) is well posed if and only if the dominating ODE~(\ref{Dode}) exists boundary upper/lower solutions $\overline{\mathbf{y}}_{t}, \underline{\mathbf{y}}_{t}$ on $[0,T]$:
\bed
\overline{\mathbf{y}}_{t}=\bar{h}+\int_{t}^{T} \bar{F}\left(s, \overline{\mathbf{y}}_{s}\right) d s, \quad \underline{\mathbf{y}}_{t}=\underline{h}+\int_{t}^{T} \underline{F}\left(s, \underline{\mathbf{y}}_{s}\right) d s
\eed
where  $\bar{h}, \underline{h}$ denote the upper/lower bound of $h$ and
\bed
\bar{F}(t, y):=\operatorname{esssup} F(t, y), \quad \underline{F}(t, y):=\operatorname{essinf} F(t, y) .
\eed
\end{lemma}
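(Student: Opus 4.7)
The plan is to reduce well-posedness of the fully coupled linear FBSDE to boundedness of a one-dimensional ODE via the decoupling-field machinery of the Unified Approach. First I would note that, by the general Unified Approach principle, (\ref{LF}) admits a unique adapted solution on $[0,T]$ if and only if the associated variational FBSDE (\ref{VF}) is uniformly well posed, which in turn is equivalent to the existence of a uniformly Lipschitz decoupling field $u(t,\omega,x)$ with $Y_t = u(t,X_t)$. Since the coefficients of (\ref{LF}) are linear in $(x,y,z)$ and the terminal condition $Y(T)=hX(T)$ is homogeneous in $x$, the decoupling field must take the linear form $u(t,x) = \mathbf{y}_t \cdot x$ for some adapted scalar process $\mathbf{y}_t$ with $\mathbf{y}_T = h$. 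Thus the Lipschitz constant of $u$ coincides with the essential bound of $\mathbf{y}$, and well-posedness reduces to producing a bounded $\mathbf{y}$.

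Next, I would derive the ODE governing $\mathbf{y}_t$ by assuming $Y_t = \mathbf{y}_t X_t$, applying It\^o's formula, and matching $dW$ coefficients in the backward equation. This yields $Z_t = X_t\,\mathbf{y}_t(\sigma_1 + \sigma_2 \mathbf{y}_t)/(1-\sigma_3\mathbf{y}_t)$, provided $1-\sigma_3(t)\mathbf{y}_t \neq 0$. Substituting this expression into the $dt$ coefficients and dividing by $X_t$ produces precisely the dominating ODE (\ref{Dode}) with drift $F$ as in (\ref{F}). Hence the FBSDE is well posed if and only if (\ref{Dode}) has a bounded solution on the whole interval $[0,T]$ that stays uniformly away from the singular hyperplane $\sigma_3 \mathbf{y} = 1$.

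For the sufficiency direction, given boundary upper and lower solutions $\overline{\mathbf{y}}_t, \underline{\mathbf{y}}_t$ of the envelope ODEs driven by $\bar F$ and $\underline F$, I would apply a one-sided comparison argument: since $\bar F$ (resp.\ $\underline F$) dominates (resp.\ is dominated by) $F$ pointwise, any candidate solution of (\ref{Dode}) with terminal value $h\in[\underline h,\bar h]$ is sandwiched, $\underline{\mathbf{y}}_t \leq \mathbf{y}_t \leq \overline{\mathbf{y}}_t$, so it is uniformly bounded and bounded away from $1/\sigma_3$. Standard Picard iteration on the truncated, Lipschitz drift then produces a global solution, yielding a uniformly Lipschitz decoupling field $u(t,x)=\mathbf{y}_t x$ and, via the Unified Approach, well-posedness of (\ref{LF}). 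Conversely, if (\ref{LF}) is well posed, the decoupling field exists and is uniformly Lipschitz; taking its slope in $x$ defines the process $\mathbf{y}_t$, and extremizing over the essential range of the random coefficients furnishes the envelope solutions $\overline{\mathbf{y}}, \underline{\mathbf{y}}$.

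The main obstacle is handling the singularity at $1-\sigma_3(t)\mathbf{y}_t=0$, where $F$ blows up: a priori the ODE (\ref{Dode}) may fail to be globally solvable on $[0,T]$, so one cannot run naive Picard iteration on $F$ itself. The whole purpose of the boundary upper/lower condition is to trap the true trajectory inside a compact region disjoint from this singular set. Making the comparison rigorous in the random-coefficient setting requires replacing $F$ by its essential envelopes $\bar F, \underline F$, so that the sandwich processes are driven by drifts whose monotonicity in $y$ can be exploited pathwise; this is the only delicate point and occupies most of the technical work.
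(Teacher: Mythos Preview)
The paper does not supply a proof of this lemma: it is stated as a quotation from Ma, Wu, Zhang and Zhang \cite{Ma2015On} and used as a black box, with no argument given in the present paper. There is therefore nothing in this paper to compare your proposal against.

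That said, your sketch follows the correct outline of the original Unified Approach argument. One point worth tightening: when the coefficients $b_i,\sigma_i,f_i$ are genuine adapted processes rather than deterministic functions, the ansatz $Y_t=\mathbf{y}_t X_t$ forces $\mathbf{y}_t$ to be a semimartingale, and It\^o's formula then produces a \emph{backward SDE} for $\mathbf{y}_t$ (with its own martingale part), not the deterministic ODE (\ref{Dode}). The dominating ODE (\ref{Dode}) is not the equation satisfied by the random decoupling slope itself; rather, its deterministic upper/lower solutions $\overline{\mathbf{y}}_t,\underline{\mathbf{y}}_t$ serve as a priori bounds for that random slope via a BSDE comparison argument. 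You allude to this in your final paragraph, but the middle paragraph reads as if $\mathbf{y}_t$ literally solves (\ref{Dode}), which is only true in the constant-coefficient case. Apart from this imprecision, the structure (decoupling field $\Rightarrow$ characteristic BSDE $\Rightarrow$ deterministic envelope ODEs $\Rightarrow$ comparison to trap away from the singularity $\sigma_3\mathbf{y}=1$) is the right one.
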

It is noted that some coefficients in (\ref{F}) are symmetric which indicates some different FBSDE correspond to a same dominating ODE (\ref{Dode}).

Denoting $F_s(y)=F(s,y)$, we have
\beq{Fs}
F_s(y)=\frac{\left(b_{3} \sigma_{2}-b_{2} \sigma_{3}\right) y^{3}+\left(b_{2}+\sigma_{2} f_{3}-\sigma_{3} f_{2}+\sigma_{1} b_{3}-\sigma_{3} b_{1}\right) y^{2}+\left(f_{2}+b_{1}+f_{3} \sigma_{1}-f_{1} \sigma_{3}\right) y+f_{1}}{1-\sigma_3 y}
\eeq

Note that 
\bed
b_{3} \sigma_{2}-b_{2} \sigma_{3}=-\left|                %左括号
  \begin{array}{cc}   %该矩阵一共3列，每一列都居中放置
    b_2 & b_3 \\
    \sigma_2 & \sigma_3 \\  %第一行元素
  \end{array}
\right|,
\ \ \ 
\sigma_2f_3-\sigma_3f_2=\left|                %左括号
  \begin{array}{cc}   %该矩阵一共3列，每一列都居中放置
    -f_2 & -f_3 \\
    \sigma_2 & \sigma_3 \\  %第一行元素
  \end{array}
\right|,
\eed
\bed
\sigma_1b_3-\sigma_3 b_1=-\left|                %左括号
  \begin{array}{cc}   %该矩阵一共3列，每一列都居中放置
    b_1 & b_3 \\
    \sigma_1 & \sigma_3 \\  %第一行元素
  \end{array}
\right|,
\ \ \ 
f_3\sigma_1-f_1\sigma_3=\left|                %左括号
  \begin{array}{cc}   %该矩阵一共3列，每一列都居中放置
    -f_1 & -f_3 \\
    \sigma_1 & \sigma_3 \\  %第一行元素
  \end{array}
\right|,
\eed
where $|\cdot|$ denotes the determinant of matrix.

Rewrite (\ref{Fs}) in terms of these determinants, and we have
\beq{Fm}
F_s(y)=\frac{-\left|                %左括号
  \begin{array}{cc}   %该矩阵一共3列，每一列都居中放置
    b_2 & b_3 \\
    \sigma_2 & \sigma_3 \\  %第一行元素
  \end{array}
\right| y^{3}+\left(b_{2}+
\left|                %左括号
  \begin{array}{cc}   %该矩阵一共3列，每一列都居中放置
    -f_2 & -f_3 \\
    \sigma_2 & \sigma_3 \\  %第一行元素
  \end{array}
\right|
-\left|                %左括号
  \begin{array}{cc}   %该矩阵一共3列，每一列都居中放置
    b_1 & b_3 \\
    \sigma_1 & \sigma_3 \\  %第一行元素
  \end{array}
\right|\right) y^{2}
+
\left(f_{2}+b_{1}+
\left|                %左括号
  \begin{array}{cc}   %该矩阵一共3列，每一列都居中放置
    -f_1 & -f_3 \\
    \sigma_1 & \sigma_3 \\  %第一行元素
  \end{array}
\right|
\right) y+f_{1}}{1-\sigma_3 y}
\eeq

\begin{proposition}
Let $\small A= \left(                %左括号
  \begin{array}{ccc}   %该矩阵一共3列，每一列都居中放置
    -f_1 & -f_2 & -f_3 \\
        b_1 & b_2 & b_3 \\
    \sigma_1 & \sigma_2 & \sigma_3 \\  %第一行元素
  \end{array}
\right)$  denote the coefficient matrix of linear FBSDE ~(\ref{LF}), then for $ \forall \ p \in R$, 
$B=\small \left(                %左括号
  \begin{array}{ccc}   %该矩阵一共3列，每一列都居中放置
    -f_1 & -f_2 & -f_3+p \\
        b_1+\sigma_1 p & b_2 +\sigma_2p & b_3+\sigma_3 p \\
    \sigma_1 & \sigma_2 & \sigma_3 \\  %第一行元素
  \end{array}
\right)$ is an equivalent coefficient matrix to $A$ with respect to the dominating function ~(\ref{F}).
\end{proposition}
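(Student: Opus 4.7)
The plan is to verify directly that substituting the modified coefficients of $B$ into the rational expression (\ref{Fm}) produces exactly the same function $F_s(y)$. Since the denominator $1-\sigma_3 y$ depends only on $\sigma_3$, which is unchanged, and the constant term in the numerator is $f_1$, also unchanged, the work reduces to checking that the coefficients of $y$, $y^2$ and $y^3$ in the numerator are preserved.

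First I would write down the new coefficients explicitly: $b_i' = b_i + \sigma_i p$ for $i=1,2,3$, $f_3' = f_3 - p$, while $f_1$, $f_2$, $\sigma_1$, $\sigma_2$, $\sigma_3$ are untouched. Then I would evaluate each of the five $2\times 2$ minors appearing in (\ref{Fm}) on these perturbed coefficients. The key observation is that replacing the $b$-row by $b+p\sigma$ is an elementary row operation that adds a multiple of the $\sigma$-row, so the two minors formed only from the $b$- and $\sigma$-rows, namely $b_2\sigma_3-b_3\sigma_2$ and $b_1\sigma_3-b_3\sigma_1$, are automatically invariant. Consequently the cubic coefficient $-(b_3\sigma_2-b_2\sigma_3)$ and the $-(b_1\sigma_3-b_3\sigma_1)$-piece of the $y^2$ coefficient are unaffected.

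The remaining minors involve the $-f$-row: $\sigma_2 f_3 - \sigma_3 f_2$ (entering the $y^2$ coefficient) and $\sigma_1 f_3 - \sigma_3 f_1$ (entering the $y$ coefficient). Under $f_3 \mapsto f_3 - p$, these pick up perturbations $-p\sigma_2$ and $-p\sigma_1$, respectively. But the $y^2$ coefficient also contains the standalone term $b_2$, which becomes $b_2 + \sigma_2 p$, and the $y$ coefficient contains the standalone $b_1$, which becomes $b_1 + \sigma_1 p$. These two extra $p\sigma_i$ contributions cancel the perturbations coming from the $f_3$-shift exactly, so every $p$-dependent term vanishes and $F_s(y)$ for $B$ coincides with $F_s(y)$ for $A$; by Lemma 3.1 this gives the claimed equivalence.

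The proof is therefore essentially a bookkeeping exercise. The only conceptual hurdle is recognizing the dual role of the transformation: the $p\sigma$-additions to the $b$-row are designed so that all $(b,\sigma)$-minors are invariant under a row operation, while the simultaneous $-p$ shift of $f_3$ is calibrated precisely to kill the residual $p\sigma_i$ contributions produced elsewhere in the numerator. Once this cancellation pattern is seen, no further obstacles arise.
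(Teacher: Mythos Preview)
Your proposal is correct and follows essentially the same route as the paper: both proofs compute the four $2\times 2$ minors in (\ref{Fm}) for the perturbed coefficients and observe that the $(b,\sigma)$-minors are unchanged while the $-p\sigma_i$ shifts in the $(-f,\sigma)$-minors are cancelled by the $+\sigma_i p$ additions to $b_1$ and $b_2$. Your elementary-row-operation remark is a nice conceptual gloss that the paper does not spell out, and the concluding appeal to Lemma~3.1 is unnecessary here since the statement only claims equality of the dominating function, but neither point affects correctness.
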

\begin{proof}
Note that 

\bed
-\left|                %左括号
  \begin{array}{cc}   %该矩阵一共3列，每一列都居中放置
    b_2+\sigma_2p & b_3+\sigma_3 p \\
    \sigma_2 & \sigma_3 \\  %第一行元素
  \end{array}
\right|=b_{3} \sigma_{2}-b_{2} \sigma_{3},
\ \ \ 
\left|                %左括号
  \begin{array}{cc}   %该矩阵一共3列，每一列都居中放置
    -f_2 & -f_3+p \\
    \sigma_2 & \sigma_3 \\  %第一行元素
  \end{array}
\right|=\sigma_2f_3-\sigma_3f_2- \sigma_2 p,
\eed
\bed
-\left|                %左括号
  \begin{array}{cc}   %该矩阵一共3列，每一列都居中放置
    b_1+\sigma_1 p & b_3 +\sigma_3 p \\
    \sigma_1 & \sigma_3 \\  %第一行元素
  \end{array}
\right|=\sigma_1b_3-\sigma_3 b_1,
\ \ \ 
\left|                %左括号
  \begin{array}{cc}   %该矩阵一共3列，每一列都居中放置
    -f_1 & -f_3+p \\
    \sigma_1 & \sigma_3 \\  %第一行元素
  \end{array}
\right|=f_3\sigma_1-f_1\sigma_3-\sigma_1 p,
\eed
Substituting all the equations above into (\ref{Fm}), we have 
\bed
\begin{split}
\tilde{F}_s(y)=\frac{-\left|                %左括号
  \begin{array}{cc}   %该矩阵一共3列，每一列都居中放置
    b_2+\sigma_2p & b_3+\sigma_3 p \\
    \sigma_2 & \sigma_3 \\  %第一行元素
  \end{array}
\right| y^{3}+\left(b_{2}+\sigma_2 p+
\left|                %左括号
  \begin{array}{cc}   %该矩阵一共3列，每一列都居中放置
    -f_2 & -f_3 +p\\
    \sigma_2 & \sigma_3 \\  %第一行元素
  \end{array}
\right|
-\left|                %左括号
  \begin{array}{cc}   %该矩阵一共3列，每一列都居中放置
    b_1+\sigma_1 p & b_3+\sigma_3 p \\
    \sigma_1 & \sigma_3 \\  %第一行元素
  \end{array}
\right|\right) y^{2}}{1-\sigma_3 y}
\\
+\frac{
\left(f_{2}+b_{1}+\sigma_1 p+
\left|                %左括号
  \begin{array}{cc}   %该矩阵一共3列，每一列都居中放置
    -f_1 & -f_3 +p\\
    \sigma_1 & \sigma_3 \\  %第一行元素
  \end{array}
\right|
\right) y+f_{1}}{1-\sigma_3 y}=F_s(y)
\end{split}
\eed
This implies the equivalence between $A$ and $B$ in ~(\ref{F}).
\end{proof}

\begin{remark}
{\rm For multi-dimensional cases, we assume $b_i, f_i, \sigma_i, p \in \mathcal{R}^{m \times m}$. It is not trivial to get
$$-\left|                %左括号
  \begin{array}{cc}   %该矩阵一共3列，每一列都居中放置
    b_2+\sigma_2p & b_3+\sigma_3 p \\
    \sigma_2 & \sigma_3 \\  %第一行元素
  \end{array}
\right|=b_{3} \sigma_{2}-b_{2} \sigma_{3}.$$
Then it suffices to assume that $b_i, f_i, \sigma_i, p$ are all symmetric.}
\end{remark}

Similarly, we could get another family of equivalent coefficient matrix to $A$.
\begin{corollary}
Let $\small A= \left(                %左括号
  \begin{array}{ccc}   %该矩阵一共3列，每一列都居中放置
    -f_1 & -f_2 & -f_3 \\
        b_1 & b_2 & b_3 \\
    \sigma_1 & \sigma_2 & \sigma_3 \\  %第一行元素
  \end{array}
\right)$  denotes the coefficient matrix of linear FBSDE ~(\ref{LF}), then for $ \forall \ q \in R$, 
$C=\small \left(               %左括号
  \begin{array}{ccc}   %该矩阵一共3列，每一列都居中放置
    -f_1 & -f_2-f_3q & -f_3 \\
        b_1 & b_2 +b_3q & b_3 \\
    \sigma_1-q & \sigma_2+\sigma_3q & \sigma_3 \\  %第一行元素
  \end{array}
\right)$ is an equivalent coefficient matrix to $A$ with respect to the dominating function ~(\ref{F}).
\end{corollary}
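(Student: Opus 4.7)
The plan is to mimic the proof of Proposition 3.2: introduce notation for the entries of $C$, substitute them into the rational function $F_s(y)$ appearing in (\ref{Fs})--(\ref{Fm}), and verify term-by-term that $\tilde F_s(y)=F_s(y)$. Write $\tilde f_1=f_1$, $\tilde f_2=f_2+f_3 q$, $\tilde f_3=f_3$, $\tilde b_1=b_1$, $\tilde b_2=b_2+b_3 q$, $\tilde b_3=b_3$, $\tilde \sigma_1=\sigma_1-q$, $\tilde \sigma_2=\sigma_2+\sigma_3 q$, $\tilde \sigma_3=\sigma_3$. Since $\tilde\sigma_3=\sigma_3$, the denominator $1-\tilde\sigma_3 y$ in (\ref{Fm}) is unchanged, and the constant term $\tilde f_1=f_1$ is invariant by inspection.

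The essential computations are the four $2\times 2$ determinants appearing in (\ref{Fm}). I expect the two built solely from column 2 to remain invariant: $\tilde b_3\tilde\sigma_2-\tilde b_2\tilde\sigma_3=b_3\sigma_2-b_2\sigma_3$ and $\tilde\sigma_2\tilde f_3-\tilde\sigma_3\tilde f_2=\sigma_2 f_3-\sigma_3 f_2$, because the $q$-contributions inserted into $\tilde b_2,\tilde\sigma_2,\tilde f_2$ cancel within each pair. The remaining two determinants pick up a linear correction in $q$: $\tilde\sigma_1\tilde b_3-\tilde\sigma_3\tilde b_1=(\sigma_1 b_3-\sigma_3 b_1)-q b_3$ and $\tilde f_3\tilde\sigma_1-\tilde f_1\tilde\sigma_3=(f_3\sigma_1-f_1\sigma_3)-q f_3$. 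These corrections are precisely the quantities that must be absorbed elsewhere in the numerator.

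Substituting into (\ref{Fm}), the $y^3$ coefficient is already unchanged. In the $y^2$ coefficient, the extra $+b_3 q$ coming from $\tilde b_2$ exactly cancels the $-q b_3$ produced by $\tilde\sigma_1\tilde b_3-\tilde\sigma_3\tilde b_1$; in the $y$ coefficient, the extra $+f_3 q$ from $\tilde f_2$ cancels the $-q f_3$ produced by $\tilde f_3\tilde\sigma_1-\tilde f_1\tilde\sigma_3$. Hence $\tilde F_s(y)=F_s(y)$, establishing the equivalence of $A$ and $C$ with respect to (\ref{F}). The argument is essentially a bookkeeping exercise and the only delicate point is verifying that exactly two pairs of $q$-corrections cancel; there is no genuine obstacle. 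Analogously to Remark 3.3, a multi-dimensional version would additionally require symmetry hypotheses on $b_i,f_i,\sigma_i$ and $q$ so that the products $\sigma_3 q$, $b_3 q$, $f_3 q$ interact correctly with the transposes hidden in the $2\times 2$ determinant identities.
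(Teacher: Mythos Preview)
Your proof is correct and is precisely the ``similarly'' that the paper invokes: it states Corollary~3.4 without a separate argument, relying on the reader to repeat the substitution-and-cancellation computation from Proposition~3.2, which is exactly what you carry out. The bookkeeping is accurate---the two invariant determinants and the two $q$-corrections that cancel against $\tilde b_2$ and $\tilde f_2$ are all identified correctly.
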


\begin{remark}
{\rm
Note that $b_1$ and $f_2$ are symmetric in (\ref{Fs}), then it is also an equivalent coefficient matrix for $B=\footnotesize \left(                %左括号
  \begin{array}{ccc}   %该矩阵一共3列，每一列都居中放置
    -f_1 & -f_2-\sigma_1 p & -f_3+p \\
        b_1 & b_2 +\sigma_2p & b_3+\sigma_3 p \\
    \sigma_1 & \sigma_2 & \sigma_3 \\  %第一行元素
  \end{array}
\right)$ in Proposition 3.2 and $C=\footnotesize \left(                %左括号
  \begin{array}{ccc}   %该矩阵一共3列，每一列都居中放置
    -f_1 & -f_2 & -f_3 \\
        b_1+f_3q & b_2 +b_3q & b_3 \\
    \sigma_1-q & \sigma_2+\sigma_3q & \sigma_3 \\  %第一行元素
  \end{array}
\right)$ in Corollary 3.4, respectively.}
\end{remark}

\begin{remark}
{\rm
In Proposition 3.2, for any $\lambda \in R$, the equivalent matrix $B=\footnotesize \left(               %左括号
  \begin{array}{ccc}   %该矩阵一共3列，每一列都居中放置
    -f_1 & -f_2 & -f_3+\lambda \\
        b_1+\sigma_1 \lambda & b_2 +\sigma_2 \lambda & b_3+\sigma_3\lambda \\
    \sigma_1 & \sigma_2 & \sigma_3 \\  %第一行元素
  \end{array}
\right)$ corresponds to the following FBSDE:
\beq{LF1}
\begin{cases}
 dX(t)=[b_1X(t)+b_2Y(t)+b_3Z(t)]dt+[\sigma_1 X(t)+\sigma_2 Y(t)+\sigma_3 Z(t)]d\tilde{W}(t),
 \\
 -dY(t)=[f_1X(t)+f_2Y(t)+f_3Z(t)]dt-Z(t)d\tilde{W}(t),
 \\
 X_0=x \ \ \ Y(T)=hX(T),  \ \ \ \ \ \ \ \ \ \ \ \  0\leq t \leq T,
\end{cases}
\eeq
where $d\tilde{W}(t)= dW(t)+\lambda dt.$

By virtue of Girsanov Transformation,  for $\forall t\geq 0$, we have an equivalent probability $\mathbb{Q}$ with respect to $\mathbb{P}$ such that
$$\frac{d\mathbb{Q}\ |\mathcal{F}_t}{d\mathbb{P}\ |\mathcal{F}_t}=\exp{\{-\frac{1}{2}\lambda^2t-\lambda W(t) \}},$$
and $\tilde{W}(t)$ is a standard Brownian motion under probability $\mathbb{Q}$.}
\end{remark}

It is noted that, in remark 3.6, dominating function~(\ref{F}) has the same structure under equivalent probability measure $\mathbb{P}$ and $\mathbb{Q}$. As a consequence, the well-posedness of FBSDE under $\mathbb{Q}$ is equivalent to the FBSDE (\ref{LF}) under $\mathbb{P}$.
Next we present the main result for this section.
\begin{theorem}
For any R-valued bounded process $  p, q :[0,T]\mapsto R $ , the well-posedness of the following FBSDE is equivalent to FBSDE~(\ref{LF}):

\beq{LF2}
\begin{cases}
 dX(t)=[(b_1+\sigma_1) X(t)+(b_2+\sigma_2 p+b_3q+\sigma_3 pq)Y(t)+(b_3+\sigma_3 p)Z(t)]dt
\\ 
\qquad\qquad\qquad\qquad\qquad\qquad\qquad\qquad\qquad+[(\sigma_1-q) X(t) +(\sigma_2+\sigma_3 p) Y(t)+\sigma_3 Z(t)] dW(t),
 \\
 -dY(t)=[f_1X(t)+(f_2+f_3q-pq)Y(t)+(f_3-p)Z(t)]dt-Z(t)dW(t),
 \\
 X_0=x \ \ \ Y(T)=hX(T),  \ \ \ \ \ \ \ \ \ \ \ \  0\leq t \leq T.
\end{cases}
\eeq
\end{theorem}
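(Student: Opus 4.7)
The plan is to exhibit the FBSDE~(\ref{LF2}) as the result of composing the two elementary transformations already shown to be dominating-function-invariant, namely the one from Proposition~3.2 with parameter $p$ and the one from Corollary~3.4 with parameter $q$, and then to invoke Lemma~3.1. Concretely, I would first apply Proposition~3.2 to the coefficient matrix $A$ of (\ref{LF}) with the (possibly time-dependent) scalar $p$, producing the intermediate matrix $B$ in which $f_3\mapsto f_3-p$ and $b_i\mapsto b_i+\sigma_i p$ for $i=1,2,3$, while the $\sigma_i$ and $f_1,f_2$ are unchanged. I would then apply Corollary~3.4 to $B$ with the scalar $q$, which acts by $\sigma_1\mapsto\sigma_1-q$, $\sigma_2\mapsto\sigma_2+\sigma_3 q$, $b_2\mapsto b_2^B+b_3^B q$, and $f_2\mapsto f_2^B+f_3^B q$. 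A direct substitution (using $f_3^B=f_3-p$, $b_2^B=b_2+\sigma_2 p$, $b_3^B=b_3+\sigma_3 p$) recovers exactly the coefficient pattern appearing in (\ref{LF2}); the only step of any arithmetic is expanding $(b_2+\sigma_2 p)+(b_3+\sigma_3 p)q$ and $f_2+(f_3-p)q$, which gives the cross-term $\sigma_3 pq$ in the $Y$-drift of $X$ and the cross-term $-pq$ in the $Y$-drift of $Y$ that one sees in~(\ref{LF2}).

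Once this identification is made, the proof is essentially immediate: Proposition~3.2 shows that the $B$-transformation leaves the dominating function $F_s(y)$ in (\ref{Fs})--(\ref{Fm}) unchanged, and Corollary~3.4 shows the same for the $C$-transformation, so their composition also preserves $F_s(y)$. Consequently the dominating ODE~(\ref{Dode}) associated with (\ref{LF2}) is literally identical to the one associated with (\ref{LF}), and the upper/lower envelopes $\bar F,\underline F$ and the terminal data $h$ are the same. Lemma~3.1 then says that (\ref{LF}) is well posed if and only if (\ref{Dode}) admits boundary upper and lower solutions on $[0,T]$, and this statement applies verbatim to (\ref{LF2}); hence the two well-posedness properties are equivalent.

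Two subtleties have to be addressed. First, Proposition~3.2 and Corollary~3.4 are stated for a fixed scalar, whereas the theorem allows $p,q$ to be $\mathbb{F}$-adapted bounded processes. This is harmless because the identity $\tilde F_s(y)=F_s(y)$ is pointwise in $(s,\omega)$, so plugging in $p(s,\omega),q(s,\omega)$ preserves it $ds\otimes d\mathbb{P}$-almost everywhere; boundedness of $p,q$ ensures the transformed coefficients still lie in $L^\infty_{\mathcal F}([0,T];\mathbb H)$, so (\ref{LF2}) remains a linear FBSDE of the form to which Lemma~3.1 applies. Second, I should record that the equivalence runs both ways: applying the inverse transformations (i.e.\ with parameters $-p,-q$ in the appropriate order) to (\ref{LF2}) recovers (\ref{LF}), so there is no asymmetry.

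The step I expect to be slightly delicate, though still mechanical, is the bookkeeping in composing the two transformations in the correct order and verifying that the resulting $3\times 3$ matrix matches (\ref{LF2}) entry by entry; in particular one has to be careful that Corollary~3.4 is applied to $B$ rather than to $A$, which is why $b_i^B$ and $f_3^B$ (not $b_i$ and $f_3$) enter the cross-terms. Beyond that verification, no analytic work is required: the heavy lifting has already been done in Proposition~3.2, Corollary~3.4 and Lemma~3.1, and the theorem is then a one-line application.
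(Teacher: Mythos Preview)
Your proposal is correct and follows exactly the paper's approach: write the coefficient matrix of (\ref{LF2}) as the result of first applying Proposition~3.2 with parameter $p$ to $A$ (yielding $B$) and then Corollary~3.4 with parameter $q$ to $B$, conclude that the dominating function and hence the dominating ODE~(\ref{Dode}) are unchanged, and invoke Lemma~3.1. Your extra remarks about $p,q$ being processes and about the inverse transformation are reasonable elaborations, but the core argument is identical to the paper's.
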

\begin{proof}
Note that the coefficient matrix of (\ref{LF2}) is as the follows:

$$D= \left(               %左括号
  \begin{array}{ccc}   %该矩阵一共3列，每一列都居中放置
    -f_1 & -f_2+(-f_3+p)q & -f_3+p\\
        b_1+\sigma_1 p & b_2 +\sigma_2 p+(b_3+\sigma_3p)q & b_3+\sigma_3p \\
    \sigma_1-q & \sigma_2+\sigma_3q & \sigma_3 \\  %第一行元素
  \end{array}
\right).$$

According to Corollary 3.4, $D$ is an equivalent coefficient matrix to $B=\small \left(                %左括号
  \begin{array}{ccc}   %该矩阵一共3列，每一列都居中放置
    -f_1 & -f_2 & -f_3+p \\
        b_1+\sigma_1 p & b_2 +\sigma_2p & b_3+\sigma_3 p \\
    \sigma_1 & \sigma_2 & \sigma_3 \\  %第一行元素
  \end{array}
\right)$.
It is a straight consequence, by Proposition 3.2, that $B$ is equivalent to 
$\small A= \left(                %左括号
  \begin{array}{ccc}   %该矩阵一共3列，每一列都居中放置
    -f_1 & -f_2 & -f_3 \\
        b_1 & b_2 & b_3 \\
    \sigma_1 & \sigma_2 & \sigma_3 \\  %第一行元素
  \end{array}
\right)$.
It is easy to get that FBSDE~(\ref{LF2}) and (\ref{LF}) has the same dominating ODE~(\ref{Dode}). This completes the proof by Lemma 3.1.
\end{proof}
We can also formulate other equivalent matrix with respect to $A$, by applying  Proposition 3.2, Corollary 3.4 and Remark 3.5 sequentially, and the proof would be similar.

 Next we employ the results in this section to investigate FBSDE~(\ref{LF}) with all the coefficients constant. In this case, $\overline{h}=\underline{h}=h\in R $, and 
\bed
\begin{split}
F(y)&=\overline{F}(t,y)=\underline{F}(t,y)
 \\
 &=\small \frac{\left(b_{3} \sigma_{2}-b_{2} \sigma_{3}\right) y^{3}+\left(b_{2}+\sigma_{2} f_{3}-\sigma_{3} f_{2}+\sigma_{1} b_{3}-\sigma_{3} b_{1}\right) y^{2}+\left(f_{2}+b_{1}+f_{3} \sigma_{1}-f_{1} \sigma_{3}\right) y+f_{1}}{1-\sigma_3 y},
 \end{split}
 \eed
 where $b_i, \sigma_i, f_i, i=1,2,3$ are all constants.
 
 By virtue of the Unified Approach,  Ma et al. \cite{Ma2015On} presented a necessary and sufficient condition for the existence and uniqueness of solutions to such cases:
 \begin{lemma} If the coefficients $b_i, \sigma_i, f_i, i=1,2,3$ and $h$ are all constants, then linear FBSDE~(\ref{LF}) exists a unique solution for arbitrary $T>0$ and terminal condition $h \ (h\neq \frac{1}{\sigma_3})$ if and only if one of the following cases holds:
 \begin{itemize}
\item[(i)]  \ \ $h<\frac{1}{\sigma_3}, \ \ F(h)\leq 0, $ and either\ \ $ F(y) $ has a zero point in $(-\infty,h]$ or $ b_3\sigma_2-b_2\sigma_3=0$;
 
\item[(ii)]  \ \ $h>\frac{1}{\sigma_3}, \ \ F(h)\geq 0, $ and either\ \ $ F(y) $ has a zero point in $[h,\infty)$ or $ b_3\sigma_2-b_2\sigma_3=0$;

\item[(iii)]  \ \ $h<\frac{1}{\sigma_3}, \ \ F(h)\geq 0,$ \ \  and F has a zero point in $[h,\frac{1}{\sigma_3}]$;

\item[(iv)]  \  \ $h>\frac{1}{\sigma_3}, \ \ F(h)\leq 0,$ \ \  and F has a zero point in $[\frac{1}{\sigma_3},h]$;
\end{itemize}
 
 \end{lemma}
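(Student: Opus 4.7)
The plan is to apply Lemma 3.1 to reduce the well-posedness of (\ref{LF}) to a purely deterministic question about a scalar autonomous ODE, and then perform a phase-line analysis of that ODE. Because all the coefficients $b_i,\sigma_i,f_i$ are constant, $\overline{F}=\underline{F}=F$ and $\overline{h}=\underline{h}=h$, so the upper/lower boundary solutions to the dominating ODE coincide with the unique maximal solution of $-\dot{\mathbf{y}}_t=F(\mathbf{y}_t)$ with $\mathbf{y}_T=h$. Reversing time via $u(s):=\mathbf{y}_{T-s}$, well-posedness of (\ref{LF}) for every $T>0$ is equivalent to global existence on $[0,\infty)$ of the initial-value problem $\dot u(s)=F(u(s))$, $u(0)=h$. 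So the entire lemma becomes the classification of initial data $h$ from which this scalar ODE has a global forward solution.

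Next I would inventory the two obstructions to global existence of $u(\cdot)$: reaching the pole $y^{\*}=1/\sigma_3$ of $F$ in finite time, and blowing up to $\pm\infty$ in finite time. Since $F$ is rational, the sign of $F$ on each interval delimited by its zeros and by $y^{\*}$ is constant, and $u(\cdot)$ is strictly monotone on each such interval. A quick asymptotic analysis of the numerator in (\ref{Fs}) gives $F(y)\sim \frac{b_3\sigma_2-b_2\sigma_3}{\sigma_3}\,y^{2}$ as $|y|\to\infty$ (when $\sigma_3\ne 0$), so the separable ODE $\dot u\sim c u^{2}$ produces finite-time blow-up whenever $b_3\sigma_2-b_2\sigma_3\ne 0$ and $u$ escapes to infinity; when $b_3\sigma_2-b_2\sigma_3=0$, the leading term drops to linear growth and no finite-time blow-up is possible. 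Similarly, near $y^{\*}$ the leading behavior is $F(y)\sim C/(1-\sigma_3 y)$, so integrating $\dot u=C/(1-\sigma_3 u)$ shows $y^{\*}$ is reached in finite time from either side as soon as $u$ is moving toward it (and no equilibrium intervenes).

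With these two tools I would then verify each clause by checking the initial direction and looking for equilibria (zeros of $F$) that can trap $u$. In case (i), $h<y^{\*}$ and $F(h)\le 0$ mean $u$ moves away from the pole in the decreasing direction, so the only way to fail is blow-up at $-\infty$; this is avoided precisely when either $F$ has a zero in $(-\infty,h]$ (a barrier stopping the descent) or $b_3\sigma_2-b_2\sigma_3=0$ (no superlinear growth). Case (ii) is the mirror image on the other side of $y^{\*}$. In case (iii), $h<y^{\*}$ with $F(h)\ge 0$ drives $u$ toward the pole, and the only rescue is an equilibrium in $[h,y^{\*}]$ that stops $u$ before it reaches $y^{\*}$; case (iv) is symmetric. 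Conversely, if $h$ and $F(h)$ satisfy none of (i)--(iv), then in each sub-case the solution has a strictly monotone orbit with no trapping equilibrium on its path, and the blow-up/pole estimates above exhibit a finite time at which $u$ ceases to exist, contradicting well-posedness for large enough $T$.

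The main obstacle I anticipate is the bookkeeping at the boundary of the regions: handling the degenerate situations $F(h)=0$ (where $u\equiv h$ is a trivial global solution, so well-posedness holds regardless) and $\sigma_3=0$ (where $y^{\*}$ disappears and the pole obstruction is replaced entirely by the blow-up obstruction) without breaking the symmetry of the statement, and making sure that when $b_3\sigma_2-b_2\sigma_3=0$ one really cannot construct a counterexample by driving $u$ to $\pm\infty$. A secondary subtlety is verifying that a zero of $F$ at exactly $h$ or exactly $y^{\*}$ is accommodated by the closed intervals in (i)--(iv); this will follow from the standard uniqueness/comparison for the scalar ODE $\dot u=F(u)$ on any sub-interval where $F$ is Lipschitz.
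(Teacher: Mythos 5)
The paper does not actually prove this statement: Lemma~3.8 is imported verbatim from Ma, Wu, Zhang and Zhang \cite{Ma2015On}, so there is no in-paper proof to compare against. Your reconstruction is nonetheless the right argument, and it is essentially the one used in that source: with constant coefficients $\bar F=\underline F=F$ and $\bar h=\underline h=h$, so Lemma~3.1 reduces well-posedness for all $T>0$ to global forward existence of the scalar autonomous problem $\dot u=F(u)$, $u(0)=h$, and the four cases are exactly the phase-line classification of orbits that avoid both failure modes (hitting the pole $1/\sigma_3$ in finite time, and finite-time blow-up at $\pm\infty$, the latter occurring precisely when $b_3\sigma_2-b_2\sigma_3\neq 0$ makes $F$ superlinear so that $\int dy/|F(y)|$ converges at infinity). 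One small slip: since $1-\sigma_3 y\sim-\sigma_3 y$, the correct asymptotic is $F(y)\sim-\frac{b_3\sigma_2-b_2\sigma_3}{\sigma_3}\,y^2$, not $+$; this does not affect your qualitative conclusion, which only needs the quadratic order of growth. The degenerate situations you flag ($\sigma_3=0$, a zero of $F$ sitting exactly at $h$ or at $1/\sigma_3$, and $F(h)=0$ giving the constant global solution) are indeed where the bookkeeping lives, and your closed-interval reading of (i)--(iv) together with uniqueness for the scalar ODE handles them correctly.
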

 
%In contrast, a linear FBSDE with constant coefficients ($h\neq \frac{1}{\sigma_3}$) is not well-posed if and only if one of the following cases hold:
  % \begin{itemize}
 
%\item  \ \ $b_3\sigma_2-b_2\sigma_3 > 0,$ $F(h)\geq 0$,  and F has no zero point in $[h,\infty]$;

%\item \ \ $b_3\sigma_2-b_2\sigma_3 > 0,$ $F(h)\leq 0$, and F has no zero point in $[-\infty,h]$;

%\item  \ \ $\sigma_3b_2-\sigma_2b_3=0$,  $F(h)\geq 0$,  and $h<\frac{1}{\sigma_3}$;

%\item  \  \ $\sigma_3b_2-\sigma_2b_3=0$,  $F(h) \leq 0$, and $h>\frac{1}{\sigma_3}$.
%\end{itemize}
Note that it is difficult to get all zero points of $F(y)$. To make full use of Lemma 3.8, we need to simplify the criterion which is easy to check.  Denoting
\bed
 \mathcal{L}(y)=(1-\sigma_3 y)F(y)=\left(b_{3} \sigma_{2}-b_{2} \sigma_{3}\right) y^{3}+\left(b_{2}+\sigma_{2} f_{3}-\sigma_{3} f_{2}+\sigma_{1} b_{3}-\sigma_{3} b_{1}\right) y^{2}+\left(f_{2}+b_{1}+f_{3} \sigma_{1}-f_{1} \sigma_{3}\right) y+f_{1},
 \eed 
   we present the following sufficient condition:
 \begin{theorem}
 Assume all coefficients $b_i, \sigma_i, f_i, h$ are constants in the linear FBSDE~(\ref{LF}). Then it is well-posed for arbitrary $T>0$ if one of the following cases hold true:
 \begin{itemize}
 \item[(i)] $1-\sigma_3 h > 0, \   b_3\sigma_2-b_2\sigma_3 \leq 0$ and \ $\mathcal{L}(h)\cdot \sigma_3 \leq 0$; 
 \item[(ii)] $1-\sigma_3 h < 0, \   b_3\sigma_2-b_2\sigma_3 \geq 0$ and \ $\mathcal{L}(h)\cdot \sigma_3 \leq 0$; 
 \item[(iii)] $\sigma_3>0, \ \mathcal{L}(\frac{1}{\sigma_3}) \leq 0$ and $\mathcal{L}(h) \geq 0;$
  \item[(iv)] $\sigma_3<0, \ \mathcal{L}(\frac{1}{\sigma_3}) \geq 0$ and $\mathcal{L}(h) \leq 0;$
 \end{itemize}
 \end{theorem}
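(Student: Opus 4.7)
The plan is to show that each of the four hypotheses of Theorem 3.9 implies one of the four cases in Lemma 3.8, from which well-posedness is immediate. The bridge between the two statements is the polynomial identity $\mathcal{L}(y) = (1-\sigma_3 y)F(y)$. This lets me translate the conditions ``$F(h)\leq 0$'' (or $\geq 0$) and ``$F$ has a zero in a given interval'' appearing in Lemma 3.8 into conditions on the cubic polynomial $\mathcal{L}$, whose leading coefficient is exactly $b_3\sigma_2-b_2\sigma_3$ and whose roots (away from $1/\sigma_3$) coincide with those of $F$.

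For case (i) (and symmetrically case (ii)), I would split on the sign of $\sigma_3$. The hypothesis $\mathcal{L}(h)\cdot\sigma_3 \leq 0$ together with $1-\sigma_3 h > 0$ fixes the sign of $F(h)=\mathcal{L}(h)/(1-\sigma_3 h)$, and simultaneously pins down whether $h$ lies to the left or right of $1/\sigma_3$, thereby selecting between Lemma 3.8(i) and (ii). If $b_3\sigma_2-b_2\sigma_3=0$ the alternative in Lemma 3.8 is already satisfied; otherwise the strict inequality $b_3\sigma_2-b_2\sigma_3<0$, combined with the asymptotic behavior of the cubic $\mathcal{L}$ at $\pm\infty$ and the known sign of $\mathcal{L}(h)$, produces via the intermediate value theorem a root of $\mathcal{L}$ in the required unbounded interval $(-\infty,h]$ or $[h,\infty)$. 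One then verifies that this root is not $1/\sigma_3$ (using that $h$ and $1/\sigma_3$ lie on opposite sides of the root-containing interval), so it is indeed a root of $F$.

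For cases (iii) and (iv), the opposite signs of $\mathcal{L}(h)$ and $\mathcal{L}(1/\sigma_3)$ directly yield, via the intermediate value theorem applied to the continuous cubic $\mathcal{L}$, a root in the closed interval bounded by $h$ and $1/\sigma_3$. The specified sign of $\sigma_3$ together with the sign of $\mathcal{L}(h)$ fixes the sign of $1-\sigma_3 h$, hence both the sign of $F(h)$ and the ordering of $h$ and $1/\sigma_3$, matching Lemma 3.8(iii) when $h<1/\sigma_3$ and Lemma 3.8(iv) when $h>1/\sigma_3$.

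The main obstacle I anticipate is not conceptual but is careful sign bookkeeping across the several sub-cases, plus the degenerate boundary situation where $\mathcal{L}(1/\sigma_3)=0$. In that degenerate case $(1-\sigma_3 y)$ divides $\mathcal{L}(y)$, so $F$ extends to a polynomial on $\mathbb{R}$ and the same intermediate value argument still delivers a bona fide zero of $F$ in the required interval. Once these sub-cases are handled, Lemma 3.8 closes the argument.
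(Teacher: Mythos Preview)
Your proposal is correct and follows essentially the same route as the paper: reduce each case of Theorem 3.9 to one of the four cases of Lemma 3.8 via the identity $\mathcal{L}(y)=(1-\sigma_3 y)F(y)$, splitting on the sign of $\sigma_3$ and using the leading coefficient $b_3\sigma_2-b_2\sigma_3$ of the cubic together with the intermediate value theorem to locate the required zero. Your treatment is in fact slightly more careful than the paper's (you flag the degenerate case $\mathcal{L}(1/\sigma_3)=0$ and the need to check the root is not $1/\sigma_3$); the only imprecision is the claim in cases (iii)/(iv) that $\sigma_3$ and the sign of $\mathcal{L}(h)$ \emph{determine} the sign of $1-\sigma_3 h$ --- they do not, and you must genuinely split into the two sub-cases $h<1/\sigma_3$ and $h>1/\sigma_3$, exactly as you then indicate.
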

 \begin{proof}
 (i) Note that the proof is trivial for $b_3\sigma_2-b_2\sigma_3 = 0$. 
 
 For cases of $b_3\sigma_2-b_2\sigma_3<0$, if $\sigma_3>0$, we have 
$$h<\frac{1}{\sigma_3} \ \ \text{and}\ \  \mathcal{L}(h)\leq 0.   $$
It indicates that 
 $$F(h)=\frac{\mathcal{L}(h)}{1-\sigma_3h}\leq 0.$$
 
Note that $b_3\sigma_2-b_2\sigma_3 < 0$ is the coefficient of $y^3$ in $\mathcal{L}(y).$ It is easy to get a $\mathcal{L}$ has a zero point in $(-\infty,h]$ which coincides with case (i) of Lemma 3.8.

 If $\sigma_3<0$, we have 
 $$h>\frac{1}{\sigma_3} ,\  \mathcal{L}(h)\geq 0  \ \text{and}\ \ F(h)=\frac{\mathcal{L}(h)}{1-\sigma_3h}\geq 0.$$ And $b_3\sigma_2-b_2\sigma_3 < 0$ leads to that $L$ has a zero point in $[h,+\infty)$ which corresponds to case (ii) of Lemma 3.8.
 
 (ii) can be proved in a similar way.
 
 (iii) When $\sigma_3>0$, we first assume $h<\frac{1}{\sigma_3}$. We can get $F(h)\geq 0$ owing to $\mathcal{L}(h) \geq 0$.
 
  In addition, it is noted that $\mathcal{L}(\frac{1}{\sigma_3}) < 0$. Owing to $\mathcal{L}(\cdot)$ being a continuous function, then there exists a constant $\eta< \frac{1}{\sigma_3}$ such that $$\mathcal{L}(\eta)\leq 0 .$$
 It follows that $\mathcal{L}$ has a zero point in $[h,\eta]$ which coincides with case (iii) of Lemma 3.8.
 Similarly, if $h>\frac{1}{\sigma_3}$, this case coincides with case (iv) of Lemma 3.8.
 
  (iv) can be proved similarly which completes the proof.
 \end{proof}
 
  \begin{remark}
{\rm  
However, the monotonicity condition (Lemma 2.2) is a sufficient condition. Actually, Liu and Wu \cite{LiuWu} proved that, for (\ref{LF}) with constant coefficients, the monotonicity condition is indeed a special case of the unified approach. }
  \end{remark}
  
  Then, by employing the results in this section, we investigate the difference between the monotonicity condition (Lemma 2.2) and the unified approach. For cases satisfying the unified approach while the monotonicity condition do not hold, we apply our results to derive some feasible values of $p$ such that the equivalent coefficient matrix also holds for the monotonicity conditions. 

  Note that the equivalent coefficient matrix $B=\small \left(                %左括号
  \begin{array}{ccc}   %该矩阵一共3列，每一列都居中放置
    -f_1 & -f_2 & -f_3+p \\
        b_1+\sigma_1 p & b_2 +\sigma_2p & b_3+\sigma_3 p \\
    \sigma_1 & \sigma_2 & \sigma_3 \\  %第一行元素
  \end{array}
\right)$ and $C=\small \left(               %左括号
  \begin{array}{ccc}   %该矩阵一共3列，每一列都居中放置
    -f_1 & -f_2-f_3q & -f_3 \\
        b_1 & b_2 +b_3q & b_3 \\
    \sigma_1-q & \sigma_2+\sigma_3q & \sigma_3 \\  %第一行元素
  \end{array}
\right)$ could be transformed into the symmetric structure for Lemma 2.2:
\bed
\small \hat{B}= \left(                %左括号
  \begin{array}{ccc}   %该矩阵一共3列，每一列都居中放置
    -f_1 & (b_1-f_2+\sigma_1 p)/2 & (\sigma_1-f_3+p)/2 \\[0.05in]
        (b_1-f_2+\sigma_1 p)/2 & b_2 +\sigma_2p & (\sigma_2+b_3+\sigma_3 p)/2 \\[0.05in]
    (\sigma_1-f_3+p)/2 & (\sigma_2+b_3+\sigma_3 p)/2 & \sigma_3 \\  %第一行元素
  \end{array}
\right) 
\eed
and 
\bed
\qquad\qquad\qquad \small \hat{C}=\left(               %左括号
  \begin{array}{ccc}   %该矩阵一共3列，每一列都居中放置
    -f_1 & ( b_1-f_2-f_3q )/2 & (\sigma_1-f_3-q )/2 \\[0.05in]
       ( b_1-f_2-f_3q )/2 & b_2 +b_3q & (b_3+\sigma_2+\sigma_3q)/2 \\[0.05in]
    (\sigma_1-f_3-q )/2& (b_3+\sigma_2+\sigma_3q)/2 & \sigma_3 \\  %第一行元素
  \end{array}
\right), \ \text{respectively.}
\eed

Here we present a theorem to determine the value of $p$.
\begin{theorem}
The equivalent coefficient matrix $B$ holds for monotonicity conditions (Lemma 2.2) if $p$ satisfies one of the following criterion
\begin{itemize}
\item[(i)]  $h<0$, $f_1<0$ and
\beq{cri1}
\begin{cases}
\small\left|                %左括号
  \begin{array}{cc}   %该矩阵一共3列，每一列都居中放置
    -f_1 & (b_1-f_2+\sigma_1 p)/2  \\[0.05in]
        (b_1-f_2+\sigma_1 p)/2 & b_2 +\sigma_2p  \\
  \end{array}
\right| > 0 ,
\\[0.25in]
\small \left|                %左括号
  \begin{array}{ccc}   %该矩阵一共3列，每一列都居中放置
    -f_1 & (b_1-f_2+\sigma_1 p)/2 & (\sigma_1-f_3+p)/2 \\[0.05in]
        (b_1-f_2+\sigma_1 p)/2 & b_2 +\sigma_2p & (\sigma_2+b_3+\sigma_3 p)/2 \\[0.05in]
    (\sigma_1-f_3+p)/2 & (\sigma_2+b_3+\sigma_3 p)/2 & \sigma_3 \\  %第一行元素
  \end{array}
\right| > 0 ;
\end{cases}
\eeq
\item[(ii)]  $h>0$, $f_1>0$ and
\beq{cri2}
\begin{cases}
\small\left|                %左括号
  \begin{array}{cc}   %该矩阵一共3列，每一列都居中放置
    -f_1 & (b_1-f_2+\sigma_1 p)/2  \\[0.05in]
        (b_1-f_2+\sigma_1 p)/2 & b_2 +\sigma_2p  \\
  \end{array}
\right| > 0 ,
\\[0.25in]
\small \left|                %左括号
  \begin{array}{ccc}   %该矩阵一共3列，每一列都居中放置
    -f_1 & (b_1-f_2+\sigma_1 p)/2 & (\sigma_1-f_3+p)/2 \\[0.05in]
        (b_1-f_2+\sigma_1 p)/2 & b_2 +\sigma_2p & (\sigma_2+b_3+\sigma_3 p)/2 \\[0.05in]
    (\sigma_1-f_3+p)/2 & (\sigma_2+b_3+\sigma_3 p)/2 & \sigma_3 \\  %第一行元素
  \end{array}
\right| < 0 ;
\end{cases}
\eeq

\end{itemize}

\end{theorem}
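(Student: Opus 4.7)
The plan is to translate the determinantal inequalities \eqref{cri1} and \eqref{cri2} into strict definiteness of the symmetric matrix $\hat B$ via Sylvester's criterion, and then read off the conclusion from Lemma 2.2.

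First I would observe that for every $(x,y,z)\in R^3$,
\[
\begin{pmatrix} x & y & z \end{pmatrix} B \begin{pmatrix} x \\ y \\ z \end{pmatrix} = \begin{pmatrix} x & y & z \end{pmatrix} \hat B \begin{pmatrix} x \\ y \\ z \end{pmatrix},
\]
since any scalar quadratic form $v^{\top}Mv$ depends only on the symmetric part $(M+M^{\top})/2$ of $M$; a direct calculation shows that this symmetric part of $B$ is precisely the matrix $\hat B$ displayed immediately before the theorem. Consequently, the monotonicity condition of Lemma 2.2 applied to the coefficient matrix $B$ is equivalent to the corresponding sign-definiteness of $\hat B$.

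For case (i), the hypothesis $f_1<0$ makes the $1\times 1$ leading principal minor of $\hat B$ equal to $-f_1>0$, while \eqref{cri1} makes the $2\times 2$ and $3\times 3$ leading principal minors strictly positive as well. By Sylvester's criterion, $\hat B$ is positive definite, so there exist constants $\beta_1,\beta_2>0$ (for example, the smallest eigenvalue of $\hat B$) such that
\[
\begin{pmatrix} x & y & z \end{pmatrix} \hat B \begin{pmatrix} x \\ y \\ z \end{pmatrix} \geq \beta_1|x|^2+\beta_2(|y|^2+|z|^2).
\]
Combined with the hypothesis $h<0$, this matches case (ii) of Lemma 2.2, so the monotonicity condition holds for $B$. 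For case (ii), $f_1>0$ gives $-f_1<0$; combined with \eqref{cri2} the three leading principal minors of $\hat B$ are respectively negative, positive, and negative, which is Sylvester's criterion for $\hat B$ being negative definite. Thus the quadratic form is bounded above by $-\beta_1|x|^2-\beta_2(|y|^2+|z|^2)$ for some $\beta_1,\beta_2>0$; together with $h>0$ this matches case (i) of Lemma 2.2.

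The only subtlety, and in that sense the main obstacle, is to verify that the auxiliary sign restrictions tying $\beta_1,\beta_2$ to $h$ in Lemma 2.2 (for instance, ``when $\beta_1>0,\ h<0$, then $\beta_2\geq 0$'') are respected. The two sub-cases of the theorem are framed precisely so that these conditions become automatic once Sylvester's criterion delivers strict definiteness, so no further computation is needed beyond the matrix identity $v^{\top}Bv=v^{\top}\hat B v$ and an inspection of the leading principal minors.
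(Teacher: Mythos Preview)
Your proposal is correct and follows essentially the same route as the paper: symmetrize $B$ to $\hat B$, use the assumptions $f_1\lessgtr 0$ together with \eqref{cri1}/\eqref{cri2} and Sylvester's criterion to obtain strict definiteness of $\hat B$, and then invoke the matching case of Lemma~2.2. The paper's proof is terser (it asserts positive definiteness directly from \eqref{cri1} and $f_1<0$ without naming Sylvester's criterion), but the logic is identical.
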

  \begin{proof}
(i) Note that $\hat{B}$ is positive definite if ~(\ref{cri1}) and $f_1<0$ hold. Then for $\forall (x,y,z) \in \mathbb{R}^3$, there exist constants $\al_1, \al_2, \al_3 > 0 $ such that 
\bed
\begin{split}
&\left(                 %左括号
  \begin{array}{ccc}   %该矩阵一共3列，每一列都居中放置
    x & y & z \\  %第一行元素
  \end{array}
\right)
\left(                %左括号
  \begin{array}{ccc}   %该矩阵一共3列，每一列都居中放置
    -f_1 & (b_1-f_2+\sigma_1 p)/2 & (\sigma_1-f_3+p)/2 \\[0.05in]
        (b_1-f_2+\sigma_1 p)/2 & b_2 +\sigma_2p & (\sigma_2+b_3+\sigma_3 p)/2 \\[0.05in]
    (\sigma_1-f_3+p)/2 & (\sigma_2+b_3+\sigma_3 p)/2 & \sigma_3 \\  %第一行元素
  \end{array}
\right) 
\left(                 %左括号
  \begin{array}{c}   %该矩阵一共3列，每一列都居中放置
    x \\
    y \\
    z \\  %第一行元素
  \end{array}
\right)
\\[0.1in]
&=\left(                 %左括号
  \begin{array}{ccc}   %该矩阵一共3列，每一列都居中放置
    x & y & z \\  %第一行元素
  \end{array}
\right)
\left(                %左括号
  \begin{array}{ccc}   %该矩阵一共3列，每一列都居中放置
    -f_1 & -f_2 & -f_3+p \\
        b_1+\sigma_1 p & b_2 +\sigma_2p & b_3+\sigma_3 p \\
    \sigma_1 & \sigma_2 & \sigma_3 \\  %第一行元素
  \end{array}
\right)
\left(                 %左括号
  \begin{array}{c}   %该矩阵一共3列，每一列都居中放置
    x \\
    y \\
    z \\  %第一行元素
  \end{array}
\right)
\\[0.1in]
&= \alpha_1{|x|}^2+\alpha_2{|y|}^2+\alpha_3{|z|}^2, 
\end{split}
\eed
which completes the proof according to the case (ii) of Lemma 2.2.

(ii) can be proved in a similar way.
  \end{proof}
  Similarly, we can also determine the value of $q$ by virtue of $\hat{C}$.

  \begin{corollary}
  The equivalent coefficient matrix $C$ holds for monotonicity conditions (Lemma 2.2) if $q$ satisfies one of the following criterion
\begin{itemize}
\item[(i)]  $h<0$, $f_1<0$ and
\beq{cri3}
\begin{cases}
\small\left|                %左括号
  \begin{array}{cc}   %该矩阵一共3列，每一列都居中放置
     -f_1 & ( b_1-f_2-f_3q )/2  \\[0.05in]
        ( b_1-f_2-f_3q )/2 & b_2 +b_3q  \\
  \end{array}
\right| > 0 ,
\\[0.25in]
\small \left|              %左括号
  \begin{array}{ccc}   %该矩阵一共3列，每一列都居中放置
    -f_1 & ( b_1-f_2-f_3q )/2 & (\sigma_1-f_3-q )/2 \\[0.05in]
       ( b_1-f_2-f_3q )/2 & b_2 +b_3q & (b_3+\sigma_2+\sigma_3q)/2 \\[0.05in]
    (\sigma_1-f_3-q )/2& (b_3+\sigma_2+\sigma_3q)/2 & \sigma_3 \\  %第一行元素
  \end{array}
\right| > 0 ;
\end{cases}
\eeq
\item[(ii)]  $h>0$, $f_1>0$ and
\beq{cri4}
\begin{cases}
\small\left|                %左括号
  \begin{array}{cc}   %该矩阵一共3列，每一列都居中放置
    -f_1 & ( b_1-f_2-f_3q )/2  \\[0.05in]
         ( b_1-f_2-f_3q )/2 & b_2 +b_3q  \\
  \end{array}
\right| > 0 ,
\\[0.25in]
\small \left|               %左括号
  \begin{array}{ccc}   %该矩阵一共3列，每一列都居中放置
    -f_1 & ( b_1-f_2-f_3q )/2 & (\sigma_1-f_3-q )/2 \\[0.05in]
       ( b_1-f_2-f_3q )/2 & b_2 +b_3q & (b_3+\sigma_2+\sigma_3q)/2 \\[0.05in]
    (\sigma_1-f_3-q )/2& (b_3+\sigma_2+\sigma_3q)/2 & \sigma_3 \\  %第一行元素
  \end{array}
\right| < 0 .
\end{cases}
\eeq

\end{itemize}
  \end{corollary}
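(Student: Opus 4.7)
The plan is to mimic the proof of Theorem 3.12 almost verbatim, but now anchored on Corollary 3.4 (which gives the equivalence $A \sim C$) in place of Proposition 3.2, and on the symmetric matrix $\hat{C}$ displayed just before the corollary in place of $\hat{B}$. The key algebraic identity I would use is that symmetrization leaves the quadratic form invariant, that is, for every $(x,y,z)\in\mathbb{R}^3$,
$$
\left(\begin{array}{ccc} x & y & z \end{array}\right) C \left(\begin{array}{c} x \\ y \\ z \end{array}\right) = \left(\begin{array}{ccc} x & y & z \end{array}\right) \hat{C} \left(\begin{array}{c} x \\ y \\ z \end{array}\right),
$$
so the monotonicity conditions of Lemma 2.2 applied to the coefficient matrix $C$ reduce to a definiteness statement about the symmetric matrix $\hat{C}$.

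For part (i), I would invoke Sylvester's criterion. The three leading principal minors of $\hat{C}$ are $-f_1$, the $2\times 2$ determinant displayed in (\ref{cri3}), and the full $3\times 3$ determinant displayed in (\ref{cri3}). Under the hypothesis $f_1<0$ together with (\ref{cri3}), all three are strictly positive, hence $\hat{C}$ is positive definite. It then follows that there exist constants $\alpha_1,\alpha_2,\alpha_3>0$ such that
$$
\left(\begin{array}{ccc} x & y & z \end{array}\right) C \left(\begin{array}{c} x \\ y \\ z \end{array}\right) \geq \alpha_1 |x|^2 + \alpha_2 |y|^2 + \alpha_3 |z|^2 .
$$
Combined with $h<0$, this matches case (ii) of Lemma 2.2 (the version with $\beta_1,\beta_2>0$ and $h\leq 0$), so $C$ satisfies the monotonicity conditions.

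For part (ii), the same idea runs with negative definiteness. Under $f_1>0$ together with (\ref{cri4}), the leading principal minors of $\hat{C}$ satisfy $-f_1<0$, the $2\times 2$ minor $>0$, and the $3\times 3$ minor $<0$, i.e., they alternate in sign starting negative, which is exactly Sylvester's criterion for $-\hat{C}$ to be positive definite. Hence the quadratic form associated with $C$ is bounded above by $-\alpha_1|x|^2-\alpha_2(|y|^2+|z|^2)$ for some positive constants, which together with $h>0$ is case (i) of Lemma 2.2. Since by Corollary 3.4 the matrix $C$ is equivalent to $A$ with respect to the dominating function (\ref{F}), the monotonicity property transfers and the conclusion follows.

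The main obstacle is really just sign bookkeeping: one must keep track of the fact that the $(1,1)$-entry of $\hat{C}$ is $-f_1$, so positive definiteness of $\hat{C}$ forces $f_1<0$ while negative definiteness forces $f_1>0$, and these two regimes pair with $h<0$ and $h>0$ respectively via the two branches of Lemma 2.2. Beyond this, the argument is a direct transcription of the proof of Theorem 3.12, with $p$ replaced by $q$ and Proposition 3.2 replaced by Corollary 3.4.
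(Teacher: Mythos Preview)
Your proposal is correct and follows essentially the same approach as the paper: the paper itself does not spell out a separate proof for this corollary but merely remarks ``Similarly, we can also determine the value of $q$ by virtue of $\hat{C}$,'' and your argument is precisely that transcription of the proof of Theorem 3.11 with $\hat{B}$, $p$, and Proposition 3.2 replaced by $\hat{C}$, $q$, and Corollary 3.4. The final sentence about the monotonicity property ``transferring'' via Corollary 3.4 is superfluous (the statement only asserts that $C$ itself satisfies Lemma 2.2), but it does no harm.
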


  To illustrate the results in this section, here we present an example.
 
  \newtheorem{exa1}[theorem]{Example}
\begin{exa1}
{\rm
Consider a linear FBSDEs as follows:
\beq{Ex1}
\begin{cases}
 X(t)=x+\int_0^t[X(s)-Y(s)-2Z(s)]ds+\int_0^t[2Y(s)+Z(s)]dW(s),
 \\
 \\
 Y(t)=-X(T)+\int_t^T[-2X(s)+Z(s)]ds-\int_t^T Z(s)dW(s),  \ \ \ \ \ \ \ \ \ \ \ \  0\leq t \leq T,
\end{cases}
\eeq
In this example, we have $f_1=-2,\ f_2=0,\ f_3=1,\ b_1=1,\ b_2=-1,\ b_3=-2,\ \sigma_1=0,\ \sigma_2=2,\ \sigma_3=1$. Then the coefficients matrix of (\ref{Ex1}) is $\left(                 %左括号
  \begin{array}{ccc}   %该矩阵一共3列，每一列都居中放置
     2 & 0 & -1\\
    1 & -1 & -2  \\
   0 & 2 & 1  \\
  \end{array}
\right)$ and $h=-1$.
According to Lemma 2.2, it is easy to verify that (\ref{Ex1}) can not match the monotonicity conditions. Note that $h=-1 < \frac{1}{\sigma_3}=1$ , $F(h)=-1 <0$ and $ b_3\sigma_2-b_2\sigma_3=-3<0$, then there exists a constant $\zeta < h$ such that 
$$F(\zeta)>0,$$
which leads to that $F(\cdot)$ has a zero point in $(-\infty,h].$ 
FBSDE~(\ref{Ex1}) is well posed according to the case (i) of Lemma 3.8 (Unified Approach).

To find an equivalent coefficient matrix holding for monotonicity conditions (Lemma 2.2), it is noted that $h=-1 <0$ and $f_1=-2 <0$. Substituting all coefficients into (\ref{cri1}), we can get a feasible interval of $p$:
\bed
\begin{cases}
4p-\frac{9}{4}>0,
\\[0.12in]
-2 p^3+4 p^2+11p-8>0.
\end{cases}
\eed

Let denote $p=1 $ and  (\ref{Ex1}) can be transformed according to Proposition 3.2:

\beq{ex2}
\begin{cases}
 X(t)=x+\int_0^t[X(s)+Y(s)-Z(s)]ds+\int_o^t[2Y(s)+Z(s)]d\tilde{W}(s),
 \\
 \\
 Y(t)=-X(T)+\int_t^T -2X(s)ds-\int_t^TZ(s) d\tilde{W}(s),  \ \ \ \ \ \ \ \ \ \ \ \  0\leq t \leq T,
\end{cases}
\eeq
where $d\tilde{W}(s)=dW(s)-ds$ is a standard Brownian motion under probability measure $\mathbb{Q}$. Here $\mathbb{Q}$ is an equivalent probability measure to $\mathbb{P}$ with 
$$\frac{d\mathbb{Q}\ |\mathcal{F}_t}{d\mathbb{P}\ |\mathcal{F}_t}=\exp{\{-W(t)-\frac{1}{2}t\}}.$$
Note that, in new FBSDE~(\ref{ex2}), we can verify the following relations : 

\bed
\begin{split}
\left(                 %左括号
  \begin{array}{ccc}   %该矩阵一共3列，每一列都居中放置
    x & y & z \\  %第一行元素
  \end{array}
\right)
&\left(                 %左括号
  \begin{array}{ccc}   %该矩阵一共3列，每一列都居中放置
     2 & 0 & 0\\
    1 & 1 & -1 \\
   0 & 2 & 1  \\
  \end{array}
\right)
\left(                 %左括号
  \begin{array}{c}   %该矩阵一共3列，每一列都居中放置
    x \\
    y \\
    z \\  %第一行元素
  \end{array}
\right)=2x^2+xy+y^2+yz+z^2
\\
&=(x+\frac{1}{2}y)^2+x^2+(\frac{2}{3}y+\frac{3}{4}z)^2+\frac{11}{36}y^2+\frac{7}{16}z^2
\\
&\geq x^2+\frac{11}{36}y^2+\frac{7}{16}z^2\geq x^2 +\frac{1}{6}(y^2+z^2)
\end{split}
\eed
Recall that $h=-1$, by taking $\beta_1=1$ and  $\beta_2=\frac{1}{6}$, then the monotonicity conditions (Lemma 2.2) hold according to the case (ii) of Lemma 2.2.
}
\end{exa1}

\section{Linear Transformation Method}
In this section, we consider a linear transformation method for (\ref{LF}) to get another family of FBSDE.
Owing to the non-degeneracy of the transformation matrix, new FBSDE after transformation have the same well-posedness with original FBSDE. Therefore, we could get the well-posedness of original FBSDE if FBSDE after transformation is well posed on $[0,T]$.

Let introduce a non-degenerate $2 \times 2$ matrix $A=\left(  \begin{array}{cc}   %该矩阵一共3列，每一列都居中放置
     a_{11} & a_{12} \\
    a_{21} & a_{22}  \\
  \end{array} \right), a_{ij}\in R, i,j=1,2 $.
%为推进问题研究，首先构造方程组(1.1)的差分形式。
 Then we consider the following transformation for (\ref{LF}): $$\left(  \begin{array}{c}   %该矩阵一共3列，每一列都居中放置
    \tilde{X}(t) \\
    \tilde{Y}(t) \\
  \end{array} \right)=A\left(  \begin{array}{c}   %该矩阵一共3列，每一列都居中放置
    X(t) \\
    Y(t) \\
  \end{array} \right)=\left(  \begin{array}{cc}   %该矩阵一共3列，每一列都居中放置
     a_{11} & a_{12} \\
    a_{21} & a_{22}  \\
  \end{array} \right)\left(  \begin{array}{c}   %该矩阵一共3列，每一列都居中放置
    X(t) \\
    Y(t) \\
  \end{array} \right)=\left(\begin{array}{c}   %该矩阵一共3列，每一列都居中放置
   a_{11}X(t)+a_{12}Y(t) \\
    a_{21}X(t)+a_{22}Y(t) \\
  \end{array} \right), \ \  \forall t\in [0,T].$$

Also,
 $$ \begin{cases}
\ X(t)=\displaystyle \frac{a_{22}\tilde{X}(t)-a_{12}\tilde{Y}(t)}{|A|}
\\
\ Y(t)=\displaystyle \frac{-a_{21}\tilde{X}(t)+a_{11}\tilde{Y}(t)}{|A|},
\end{cases}$$
 where $|A|$ represents the determinant of $A$.

 Applying $It\hat{o}\ 's$ formula to $\tilde{X}_t$ and $\tilde{Y}_t$, the original FBSDE ~(\ref{LF}) change into the following form:
 \beq{LT}
  \begin{cases}
 d\tilde{X}(t)=[\tilde{b}_1\tilde{X}(t)+\tilde{b}_2\tilde{Y}(t)+\tilde{b}_3\tilde{Z}(t)]dt+
[\tilde{\sigma}_1\tilde{X}(t)+\tilde{\sigma}_2\tilde{Y}(t)+\tilde{\sigma}_3\tilde{Z}(t)]dW(t),
 \\[0.2in]
 -d\tilde{Y}(t)=[\tilde{f}_1\tilde{X}(t)+\tilde{f}_2\tilde{Y}(t)+\tilde{f}_3\tilde{Z}(t)]dt-\tilde{Z}(t)dW(t),
 \\[0.2in]
 \tilde{X}(0)=\displaystyle \frac{|A|}{a_{22}}x+\frac{a_{12}}{a_{22}}\tilde{Y}(0), Y(T)=\displaystyle \frac{a_{21}+a_{22}h}{a_{11}+a_{12}h}\tilde{X}(T),  \ \ \ \ \ \ \ \ \ \ \ \ 0\leq t \leq T,
\end{cases}
\eeq
where
$$\tilde{b}_1= \frac{(a_{11}b_3-a_{12}f_3)({a_{21}}^2\sigma_2-a_{21}a_{22}\sigma_1)+(a_{22}+a_{21}\sigma_3)[a_{22}(a_{11}b_1-a_{12}f_1)-a_{21}(a_{11}b_2-a_{12} f_2)]}{|A|(a_{22}+a_{21}\sigma_3)},$$

$$\tilde{b}_2=\frac{(a_{11}b_3-a_{12}f_3)(a_{12}a_{21}\sigma_1-a_{11}a_{21}\sigma_2)+(a_{22}+a_{21}\sigma_3)[a_{11}(a_{11}b_2
-a_{12}f_2)-a_{12}(a_{11}b_1-a_{12}f_1)]}{|A|(a_{22}+a_{21}\sigma_3)},$$

$$ \tilde{b}_3=\frac{a_{11}b_3-a_{12}f_3}{a_{22}+a_{21}\sigma_3},$$

$$\tilde{\sigma}_1=\frac{(a_{11}\sigma_3+a_{12})({a_{21}}^2\sigma_2-a_{21}a_{22}\sigma_1)+(a_{22}+a_{21}\sigma_3)(a_{11}a_
{22}\sigma_1-a_{11}a_{21}\sigma_2)}{|A|(a_{22}+a_{21}\sigma_3)},$$

\beq{Co}
\tilde{\sigma}_2=\frac{(a_{11}\sigma_3+a_{12})(a_{12}a_{21}\sigma_1-a_{11}a_{21}\sigma_2)+(a_{22}+a_{21}\sigma_3)({a_{11}
}^2\sigma_2-a_{11}a_{12}\sigma_1)}{|A|(a_{22}+a_{21}\sigma_3)}, 
\eeq

$$\tilde{\sigma}_3=\frac{a_{11}\sigma_3+a_{12}}{a_{22}+a_{21}\sigma_3} ,$$

$$\tilde{f}_1= \frac{(a_{21}b_3-a_{22}f_3)(a_{21}a_{22}\sigma_1-{a_{21}}^2\sigma_2)+(a_{22}+a_{21}\sigma_3)[a_{21}(a_{21}b_2-a_{22}f_2)-a_{22}(a_{21}b_1-a_{22} f_1)]}{|A|(a_{22}+a_{21}\sigma_3)},$$

$$\tilde{f}_2=\frac{(a_{21}b_3-a_{22}f_3)(a_{11}a_{21}\sigma_2-a_{12}a_{21}\sigma_1)+(a_{22}+a_{21}\sigma_3)[a_{12}(a_{21}b_1
-a_{22}f_1)-a_{11}(a_{21}b_2-a_{22}f_2)]}{|A|(a_{22}+a_{21}\sigma_3)},$$

$$ \tilde{f}_3=\frac{a_{22}f_3-a_{21}b_3}{a_{22}+a_{21}\sigma_3},$$

$$ \tilde{Z}(t)=a_{21}\sigma_1X(t)+a_{21}\sigma_2Y(t)+(a_{21}\sigma_3+a_{22})Z(t).$$

\begin{remark}
{\rm
Note that (\ref{LT}) are also the linear FBSDE of which the coefficients become much more complicated after transforming. And also, $(\tilde{X}, \tilde{Y}, \tilde{Z})$ are determined by $(X,Y,Z)$ with respect to transformation matrix $A$.}
\end{remark}

Recall that  
\bed
\begin{split}
\tilde{F}(y)
%&=\overline{F}(t,y)=\underline{F}(t,y)
% \\
 &=\frac{-\left|                %左括号
  \begin{array}{cc}   %该矩阵一共3列，每一列都居中放置
  \tilde{b}_2 & \tilde{b}_3 \\
   \tilde{\sigma}_2 & \tilde{\sigma}_3 \\  %第一行元素
 \end{array}
\right| y^{3}+\left(\tilde{b}_{2}+
\left|                %左括号
  \begin{array}{cc}   %该矩阵一共3列，每一列都居中放置
    -\tilde{f} _2& -\tilde{f}_3 \\
    \tilde{\sigma}_2 &\tilde{ \sigma}_3\\  %第一行元素
  \end{array}
\right|
-\left|                %左括号
 \begin{array}{cc}   %该矩阵一共3列，每一列都居中放置
  \tilde{ b}_1 & \tilde{b}_3 \\
  \tilde{ \sigma}_1 & \tilde{\sigma}_3\\  %第一行元素
 \end{array}
\right|\right) y^{2}
+
\left(\tilde{f}_2+\tilde{b}_1+
\left|                %左括号
 \begin{array}{cc}   %该矩阵一共3列，每一列都居中放置
  -\tilde{f}_1& -\tilde{f}_3 \\
   \tilde{\sigma}_1 & \tilde{\sigma}_3\\  %第一行元素
\end{array}
\right|
\right) y+\tilde{f}_1}{1-\tilde{\sigma}_3 y}.
 \end{split}
 \eed
Then, for the simplicity  of notation, we assume $ \frac{a_{11}}{a_{12}}=m, \frac{a_{12}}{a_{22}}=c$ and $ \frac{a_{21}}{a_{22}}=n$ which leads to $$
A=\left(  \begin{array}{cc}   %该矩阵一共3列，每一列都居中放置
     m & 1 \\
    nc & c  \\
  \end{array} \right).$$

  Substituting (\ref{Co}) into $\tilde{F}(y)$, we have 
  \beq{Co2}
 \tilde{\mathcal{L}}(y)=\tilde{F}(y)(1-\tilde{\sigma}_3y)=\Lambda_0 y^3 + \Lambda_1 y^2 + \Lambda_2 y +\tilde{f}_1 ,
  \eeq
    where
 \beq{Co1}
  \small
 \begin{split}
 &\Lambda_0=-\left|                %左括号
  \begin{array}{cc}   %该矩阵一共3列，每一列都居中放置
  \tilde{b}_2 & \tilde{b}_3 \\
    \tilde{\sigma}_2 & \tilde{\sigma} _3\\  %第一行元素
  \end{array}
\right|
\\
 &=\frac{ -\left|                %左括号
 \begin{array}{cc}   %该矩阵一共3列，每一列都居中放置
    b_2 & b_3 \\
    \sigma_2 & \sigma_3 \\  %第一行元素
  \end{array}
\right| {a_{11}}^{3}-\left(b_{2}+
\left|                %左括号
  \begin{array}{cc}   %该矩阵一共3列，每一列都居中放置
    -f_2 & -f_3 \\
    \sigma_2 & \sigma_3 \\  %第一行元素
  \end{array}
\right|
-\left|                %左括号
 \begin{array}{cc}   %该矩阵一共3列，每一列都居中放置
   b_1 & b_3 \\
   \sigma_1 & \sigma_3 \\  %第一行元素
 \end{array}
\right|\right) {a_{11}}^2 a_{12}
+
\left(f_{2}+b_{1}+
\left|                %左括号
  \begin{array}{cc}   %该矩阵一共3列，每一列都居中放置
    -f_1 & -f_3 \\
    \sigma_1 & \sigma_3 \\  %第一行元素
  \end{array}
\right|
\right) a_{11}{a_{12}}^2-f_{1}{a_{12}}^3}{-|A| \ (a_{22}+a_{21}\sigma_3)}
\\
  &=\frac{-\left|                %左括号
  \begin{array}{cc}   %该矩阵一共3列，每一列都居中放置
    b_2 & b_3 \\
    \sigma_2 & \sigma_3 \\  %第一行元素
  \end{array}
   \right| m^{3}-\left(b_{2}+
   \left|                %左括号
  \begin{array}{cc}   %该矩阵一共3列，每一列都居中放置
    -f_2 & -f_3 \\
    \sigma_2 & \sigma_3 \\  %第一行元素
  \end{array}
\right|
 -\left|                %左括号
  \begin{array}{cc}   %该矩阵一共3列，每一列都居中放置
    b_1 & b_3 \\
    \sigma_1 & \sigma_3 \\  %第一行元素
  \end{array}
   \right|\right) m^2 
+
 \left(f_{2}+b_{1}+
\left|                %左括号
  \begin{array}{cc}   %该矩阵一共3列，每一列都居中放置
   -f_1 & -f_3 \\
    \sigma_1 & \sigma_3 \\  %第一行元素
  \end{array}
    \right|
\right) m-f_{1}}{(nc-mc) \ (c+nc\sigma_3)},
%\\
%&=\ \frac{\mathcal{L}(\frac{a_{11}}{a_{12}})\ {a_{12}}^3}{|A| \ (a_{22}+a_{21}\sigma_3)}
\end{split}
\eeq

 \bed
  \small
 \begin{split}
 \Lambda_1 &=\tilde{b}_2+\left|                %左括号
  \begin{array}{cc}   %该矩阵一共3列，每一列都居中放置
  -\tilde{f}_2 & -\tilde{f}_3 \\
   \tilde{\sigma}_2 & \tilde{\sigma}_3 \\  %第一行元素
 \end{array}
\right| -
\left|                %左括号
  \begin{array}{cc}   %该矩阵一共3列，每一列都居中放置
   \tilde{b}_1 & \tilde{b}_3 \\
   \tilde{\sigma}_1 & \tilde{\sigma}_3 \\  %第一行元素
  \end{array}
\right|
\\
   &=\frac{-3\left|                %左括号
  \begin{array}{cc}   %该矩阵一共3列，每一列都居中放置
    b_2 & b_3 \\
    \sigma_2 & \sigma_3 \\  %第一行元素
  \end{array}
    \right| m^2 n - \left(b_{2}+
   \left|                %左括号
  \begin{array}{cc}   %该矩阵一共3列，每一列都居中放置
    -f_2 & -f_3 \\
   \sigma_2 & \sigma_3 \\  %第一行元素
  \end{array}
    \right|
   -\left|                %左括号
  \begin{array}{cc}   %该矩阵一共3列，每一列都居中放置
    b_1 & b_3 \\
    \sigma_1 & \sigma_3 \\  %第一行元素
 \end{array}
    \right|\right)( m^2+2mn)}{(n-m)(c+n c \sigma_3)}
   \\
   & \qquad\qquad\qquad\qquad\qquad\qquad\qquad\qquad\qquad\qquad\qquad+  \frac{\left(f_{2}+b_{1}+
\left|                %左括号
  \begin{array}{cc}   %该矩阵一共3列，每一列都居中放置
    -f_1 & -f_3 \\
    \sigma_1 & \sigma_3 \\  %第一行元素
  \end{array}
\right| \right)(2 m+n)-3f_{1}}{(n-m)(c+n c \sigma_3)},
\end{split}
\eed
 \bed
  \small
 \begin{split}
\Lambda_2 & =\tilde{f}_2+\tilde{b}_1 +\left|                %左括号
  \begin{array}{cc}   %该矩阵一共3列，每一列都居中放置
   -\tilde{f}_1 & -\tilde{f}_3 \\
    \tilde{\sigma}_1 & \tilde{\sigma}_3 \\  %第一行元素
  \end{array}
\right|
\\
&=\frac{3\left|                %左括号
  \begin{array}{cc}   %该矩阵一共3列，每一列都居中放置
    b_2 & b_3 \\
    \sigma_2 & \sigma_3 \\  %第一行元素
  \end{array}
\right| m n^2 + \left(b_{2}+
    \left|                %左括号
  \begin{array}{cc}   %该矩阵一共3列，每一列都居中放置
   -f_2 & -f_3 \\
    \sigma_2 & \sigma_3 \\  %第一行元素
  \end{array}
\right|
   -\left|                %左括号
  \begin{array}{cc}   %该矩阵一共3列，每一列都居中放置
    b_1 & b_3 \\
    \sigma_1 & \sigma_3 \\  %第一行元素
  \end{array}
\right|\right)( n^2+2mn)}{(n-m)(c+n c \sigma_3)}
    \\
    & \qquad\qquad\qquad\qquad\qquad\qquad\qquad\qquad\qquad\qquad\qquad
- \frac{\left(f_{2}+b_{1}+
\left|                %左括号
  \begin{array}{cc}   %该矩阵一共3列，每一列都居中放置
    -f_1 & -f_3 \\
    \sigma_1 & \sigma_3 \\  %第一行元素
  \end{array}
\right| \right)(2 n+m)-3f_{1}}{(n-m)(1+n \sigma_3)}.
\end{split}
  \eed 
  
For cases where Theorem 3.9 do not hold, we apply our results to derive some feasible values of $c$ such that the FBSDE after transformation meets the requirement for Theorem 3.9. 

\begin{proposition} Let $\tilde{b}_i, \tilde{f}_i, \tilde{\sigma}_i,\tilde{h}$ be given in (\ref{Co}) and $\tilde{\mathcal{L}}(y)$ take form in (\ref{Co2}). Then $A=\left(  \begin{array}{cc} 
     m & 1 \\
    Nc & c  \\
  \end{array} \right)$ is a linear transformation matrix if one of the following cases hold:

 \begin{itemize}
 \item[(i)] $1-\tilde{\sigma}_3 \tilde{h} > 0, \   \tilde{b}_3\tilde{\sigma}_2-\tilde{b}_2\tilde{\sigma}_3 \leq 0$ and \ $\tilde{\mathcal{L}}(\tilde{h})\cdot \tilde{\sigma}_3 \leq 0$; 
 \item[(ii)] $1-\tilde{\sigma}_3 \tilde{h} < 0, \   \tilde{b}_3\tilde{\sigma}_2-\tilde{b}_2\tilde{\sigma}_3 \geq 0$ and \ $\mathcal{\tilde{L}}(\tilde{h})\cdot \tilde{\sigma}_3 \leq 0$; 
 \item[(iii)] $\tilde{\sigma}_3>0, \ \mathcal{\tilde{L}}(\frac{1}{\tilde{\sigma}_3}) \leq 0$ and $\mathcal{\tilde{L}}(\tilde{h}) \geq 0;$
  \item[(iv)] $\tilde{\sigma}_3<0, \ \mathcal{\tilde{L}}(\frac{1}{\tilde{\sigma}_3}) \geq 0$ and $\mathcal{\tilde{L}}(\tilde{h}) \leq 0;$
 \end{itemize}
\end{proposition}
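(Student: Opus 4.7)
The plan is to apply Theorem~3.9 directly to the transformed FBSDE~(\ref{LT}), using $(\tilde b_i,\tilde\sigma_i,\tilde f_i,\tilde h)$ in place of the original coefficients, and then to transfer the well-posedness back through the invertible change of variables induced by $A$.

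First I would observe that (\ref{LT}) is itself a constant-coefficient linear fully coupled FBSDE of exactly the same form as (\ref{LF}). The coefficients $(\tilde b_i,\tilde\sigma_i,\tilde f_i)$ are read off from (\ref{Co}), and the terminal relation in (\ref{LT}) identifies $\tilde h = (a_{21}+a_{22}h)/(a_{11}+a_{12}h)$. All the displayed expressions in (\ref{Co}) are well-defined provided $a_{22}+a_{21}\sigma_3\neq 0$, which under the parametrization $a_{21}=Nc,\ a_{22}=c$ reduces to $c(1+N\sigma_3)\neq 0$ and is tacitly in force. Thus Theorem~3.9 is applicable with the tilde-coefficients.

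Next I would identify $\tilde{\mathcal{L}}(y)=(1-\tilde\sigma_3 y)\tilde F(y)$ in (\ref{Co2}) as precisely the ``numerator polynomial'' that Theorem~3.9 attaches to the tilde-FBSDE: the quantities $-|{\tilde b_2\ \tilde b_3;\ \tilde\sigma_2\ \tilde\sigma_3}|$, $\tilde b_3\tilde\sigma_2-\tilde b_2\tilde\sigma_3$, $1-\tilde\sigma_3\tilde h$, and $\tilde{\mathcal{L}}(\tilde h)$, $\tilde{\mathcal{L}}(1/\tilde\sigma_3)$ all match the sign tests of Theorem~3.9. Under each of the four listed cases~(i)--(iv), the corresponding case of Theorem~3.9 applies verbatim to (\ref{LT}), yielding well-posedness of the transformed FBSDE for arbitrary $T>0$.

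Finally I would close the loop by the non-degeneracy of $A$. If $(\tilde X,\tilde Y,\tilde Z)$ is the unique adapted solution of (\ref{LT}) with terminal condition $\tilde Y(T)=\tilde h\,\tilde X(T)$, then setting $(X,Y)^\top=A^{-1}(\tilde X,\tilde Y)^\top$ and solving the last line of (\ref{Co}) for $Z=[\tilde Z-a_{21}\sigma_1 X-a_{21}\sigma_2 Y]/(a_{22}+a_{21}\sigma_3)$ reconstructs an adapted triple which, by reversing the It\^o computation that produced (\ref{LT}), is a solution of (\ref{LF}) satisfying $X(0)=x$ and $Y(T)=hX(T)$; conversely any solution of (\ref{LF}) maps under $A$ to one of (\ref{LT}). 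Well-posedness is therefore equivalent for the two systems, and the proposition follows. I expect the only real obstacle to be algebraic bookkeeping: checking that the expressions in (\ref{Co}) and the polynomial coefficients $\Lambda_0,\Lambda_1,\Lambda_2$ in (\ref{Co2}) are genuinely what one obtains by substituting $(\tilde b_i,\tilde\sigma_i,\tilde f_i)$ into the formula from Section~3, so that the four cases of Theorem~3.9 transcribe cleanly into the tilde variables with no sign errors and no spurious singularities.
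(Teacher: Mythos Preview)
Your proposal is correct and mirrors the paper's own treatment: the paper does not supply a separate proof of this proposition but presents it as a direct consequence of applying Theorem~3.9 to the transformed system~(\ref{LT}) with coefficients $(\tilde b_i,\tilde\sigma_i,\tilde f_i,\tilde h)$, and then using the non-degeneracy of $A$ (cf.~(\ref{WP})) to carry well-posedness back to~(\ref{LF}). Your write-up spells out precisely this reasoning, including the bijection between solution triples.
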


%\begin{lemma}
%(Ma, Wu, Zhang and Zhang)   When  $\sigma_3(t) \equiv 0$, for any $T>0$, the FBSDE ~\ref{LF} are well posed if and only if one of the following three cases holds true:
%\begin{itemize}
%\item[(i)] There exists a constant $C_0 > 0$ such that 
%\bed
%\sigma_2 b_3 \leq -C_0|b_2+f_3\sigma_2+b_3\sigma_1|
%\eed
%\item[(ii)] There exists a constant $\lambda < \underline{h}$ and a constant $\epsilon > 0$ small enough such that 
%\bed
%\underline{F}(t,\lambda)\geq 0, \ \ \ \sigma_2 b_3 \leq \epsilon \ \ \text{and} \ \ b_2+f_3\sigma_2+b_3\sigma_1 \leq \epsilon.
%\eed
%\item[(iii)] There exists a constant $\lambda > \overline{h}$ and a constant $\epsilon > 0$ small enough such that 
%\bed
%\overline{F}(t,\lambda)\leq  0, \ \ \ \sigma_2 b_3 \leq \epsilon \ \ \text{and} \ \ b_2+f_3\sigma_2+b_3\sigma_1 \geq -\epsilon.
%\eed
%\end{itemize}
%\end{lemma}
%This lemma is a straightforward consequence of the unified approach for liner FBSDE ~\ref{LF}, and we omit the proof. Note that 

For cases where the coefficients do not hold for Theorem 3.9, we choose proper values of $c$ according to Proposition 4.2. Owing to the non-degeneracy of transformation matrix $A$, we can get the well-posedness of original FBSDE~(\ref{LF}) by
  \beq{WP}
  \left(
\begin{array}{c}   %该矩阵一共3列，每一列都居中放置
     X(t) \\
   Y(t)  \\
  \end{array}
\right)={\left(
\begin{array}{cc}   %该矩阵一共3列，每一列都居中放置
     m & 1 \\
   n c & c \\
  \end{array}
\right)}^{-1}\left(
\begin{array}{c}   %该矩阵一共3列，每一列都居中放置
        \tilde{X}(t)  \\
   \tilde{Y}(t)   \\
  \end{array}
\right),
\eeq 
$$Z(t)=\frac{\tilde{Z}(t)-nc\sigma_1X(t)-nc\sigma_2Y(t)}{(nc\sigma_3+c)}.$$

By virtue of the Linear Transformation Method, we get a family of FBSDE after transformation which are equivalent to (\ref{LF}). 
Hence we could find out a representative of such family which has a lower coupling structure of FBSDE after transformation (\ref{LT}).
  \begin{proposition}
If $n$ is the zero point of  function $$H(y)=-\left(b_{3} \sigma_{2}-b_{2} \sigma_{3}\right) y^{3}+\left(b_{2}+\sigma_{2} f_{3}-\sigma_{3} f_{2}+\sigma_{1} b_{3}-\sigma_{3} b_{1}\right) y^{2}-\left(f_{2}+b_{1}+f_{3} \sigma_{1}-f_{1} \sigma_{3}\right) y+f_{1},$$  the FBSDE after transformation is partial coupled in the following form:
 \beq{LT2}
  \begin{cases} \displaystyle
 \tilde{X}(t)=\frac{|A|}{a_{22}}x+\frac{a_{12}}{a_{22}}\tilde{Y}(0)+\int_0^t [\tilde{b}_1\tilde{X}(s)+\tilde{b}_2\tilde{Y}(s)+\tilde{b}_3\tilde{Z}(s)]ds+
\int_0^t[\tilde{\sigma}_1\tilde{X}(s)+\tilde{\sigma}_2\tilde{Y}(s)+\tilde{\sigma}_3\tilde{Z}(s)]dW(s),
 \\[0.2in]
 \displaystyle
 \tilde{Y}(t)=\frac{a_{21}+a_{22}h}{a_{11}+a_{12}h}\tilde{X}(T)+\int_t^T[\tilde{f}_2\tilde{Y}(s)+\tilde{f}_3\tilde{Z}(s)]ds-\int_t^T\tilde{Z}(s)dW(s), \ \ \   0\leq t \leq T.
\end{cases}
\eeq

  \end{proposition}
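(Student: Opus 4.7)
The plan is to show that the hypothesis $H(n)=0$ forces the coefficient $\tilde{f}_1$ in (\ref{LT}) to vanish, which immediately produces the claimed partially-coupled form (\ref{LT2}): with $\tilde{f}_1=0$ the drift of the $\tilde{Y}$-equation no longer depends on $\tilde{X}$, while the remaining coefficients $\tilde{b}_i,\tilde{\sigma}_i,\tilde{f}_2,\tilde{f}_3$, the initial datum $\tilde{X}(0)$, and the terminal relation $\tilde{Y}(T)=\frac{a_{21}+a_{22}h}{a_{11}+a_{12}h}\tilde{X}(T)$ are read straight off (\ref{LT}) with $a_{11}=m$, $a_{12}=1$, $a_{21}=nc$, $a_{22}=c$.

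First I would specialize the formula for $\tilde{f}_1$ from the block (\ref{Co}) to this choice of $A$. Using $|A|=c(m-n)$ and $a_{22}+a_{21}\sigma_3=c(1+n\sigma_3)$, the substitutions $a_{21}b_3-a_{22}f_3=c(nb_3-f_3)$, $a_{21}a_{22}\sigma_1-a_{21}^2\sigma_2=nc^2(\sigma_1-n\sigma_2)$, $a_{21}(a_{21}b_2-a_{22}f_2)=nc^2(nb_2-f_2)$, and $a_{22}(a_{21}b_1-a_{22}f_1)=c^2(nb_1-f_1)$ let me factor $c^3$ out of the numerator and reduce $\tilde{f}_1$ to
$$\tilde{f}_1\;=\;\frac{c\,\mathcal{N}(n)}{(m-n)(1+n\sigma_3)},\qquad \mathcal{N}(n)\;:=\;n(nb_3-f_3)(\sigma_1-n\sigma_2)+(1+n\sigma_3)\bigl[n^2 b_2-nf_2-nb_1+f_1\bigr].$$

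Next I would expand $\mathcal{N}(n)$ and collect by powers of $n$. The $n^3$ coefficient is $-b_3\sigma_2+\sigma_3 b_2=-(b_3\sigma_2-b_2\sigma_3)$; the $n^2$ coefficient is $b_3\sigma_1+f_3\sigma_2+b_2-\sigma_3 f_2-\sigma_3 b_1$; the $n^1$ coefficient is $-f_3\sigma_1-f_2-b_1+\sigma_3 f_1=-(f_2+b_1+f_3\sigma_1-f_1\sigma_3)$; and the constant term is $f_1$. This is exactly $H(n)$. Hence
$$\tilde{f}_1\;=\;\frac{c\,H(n)}{(m-n)(1+n\sigma_3)},$$
and the assumption $H(n)=0$, together with the standing non-degeneracy conditions $m\neq n$ (so $A$ is invertible) and $1+n\sigma_3\neq 0$ (already needed to define the other entries of (\ref{Co})), yields $\tilde{f}_1\equiv 0$. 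Substituting into (\ref{LT}) gives (\ref{LT2}).

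The only real obstacle is bookkeeping: one must expand and collect the two products defining $\mathcal{N}(n)$ without sign errors so as to match term-by-term against the cubic $H$. Conceptually the statement is the identification $\tilde{f}_1\propto H(n)$, and once that factorization is displayed the proposition is immediate. In the time-dependent setting the same pointwise identity is applied at each $t$ with $n$ allowed to be a bounded process $n(t)$ chosen to annihilate $H(t,\cdot)$; the partial-coupled structure (\ref{LT2}) is preserved on the whole interval $[0,T]$.
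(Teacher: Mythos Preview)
Your proof is correct and follows the same route as the paper: identify the numerator of $\tilde f_1$ in (\ref{Co}) with $H(n)$ (up to a nonzero scalar) and conclude $\tilde f_1=0$. The paper's argument is a one-line assertion that $H(a_{21}/a_{22})$ equals the numerator of $\tilde f_1$ (tacitly dropping the factor $a_{22}^{3}$), whereas you carry out the factorization $\tilde f_1=\dfrac{c\,H(n)}{(m-n)(1+n\sigma_3)}$ explicitly and check the coefficient-by-coefficient match; apart from this extra detail the two proofs coincide.
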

   \begin{proof}
Note that 
\bed 
\begin{split}
 &H(\frac{a_{21}}{a_{22}}) 
= (a_{21}b_3-a_{22}f_3)(a_{21}a_{22}\sigma_1-{a_{21}}^2\sigma_2)+(a_{22}+a_{21}\sigma_3)[a_{21}(a_{21}b_2-a_{22}f_2)-a_{22}(a_{21}b_1-a_{22} f_1)
\end{split}
\eed
Thus $H(n)=0$ leads to 
$$ \tilde{f}_1=0$$
 which completes the proof.
   \end{proof}
        \begin{remark}
   {\rm Besides for $\tilde{f}_1$, one can also get another partial coupled FBSDE after transformation except for $\tilde{\sigma}_3=0$. Here we need to point out that $\tilde{\sigma}_3=0$ contradicts the Proposition 4.2 which has no well-posedness.}
   \end{remark}
   Compared to (\ref{LT}), (\ref{LT2}) is a partial coupled FBSDE which has more applications in the fields of Partial Differential Equation (PDE), stochastic control and other related fields.

 %  \begin{remark}
 %  {\rm   In Wu and Yu 2014 showed that the fully coupled FBSDE (\ref{LF}) corresponds to a system of parabolic partial differential equations (PDE) with $$Y(\cdot)=u(t,x), \ \ Z(\cdot)=\frac{\partial u}{\partial x}\sigma(t,x,u(t,x),v(t,x)).$$  Whereas in  (\ref{LT2}), $\tilde{\sigma}_3=0$ would reduce the calculation complexity of corresponding PDE by
 %  $$\tilde{Y}(\cdot)=u(t,x), \ \ \tilde{Z}(\cdot)=\frac{\partial u}{\partial x}\sigma(t,x,u(t,x)).$$
  %  It is also noted that (\ref{LT2}) is a partial coupled FBSDE since the generator of backward equation contains only $(\tilde{Y},\tilde{Z})$. }
 %  \end{remark}
   
\section{The applications to Linear Quadratic(LQ) Stochastic Control Problem}
In this section, we illustrate how our results could be applied in optimal LQ control problem. In the stochastic control model, the system is governed by the stochastic differential equation (SDE):
\beq{Csys}
\begin{cases}
\mathrm{d} x(t)=[A(t) x(t)+B(t) u(t)] \mathrm{d} t+[C(t) x(t)+D(t) u(t)] \mathrm{d} W(t), \\[0.1in]
x(0)=x
\end{cases},
\eeq
where $x\in R$ and $A(\cdot), B(\cdot), C(\cdot), D(\cdot) \in L_{\mathcal{F}}^{\infty}([0, T] ; \mathbb{R}^1)$. The control process $u(\cdot) \in \mathcal{U}_{ad}$ is an $\mathcal{F}_t$-adapted process and the cost function to be minimized is defined by

\beq{costf}
J(u(\cdot))=\frac{1}{2} \mathbb{E} \int_{0}^{T}[\langle R(t) x(t), x(t)\rangle+2\langle S(t) u(t), x(t)\rangle+\langle N(t) u(t), u(t)\rangle] \mathrm{d} t+\frac{1}{2} \mathbb{E}[\langle Q x(T), x(T)\rangle],
\eeq
where  $Q\in L_{\mathcal{F}_{T}}^{\infty}(\Omega ; \mathbb{R}^1)$ and $R(\cdot), S(\cdot), N(\cdot) \in L_{\mathcal{F}}^{\infty}([0, T] ; \mathbb{R}^1)$.
\\
~{\bf Problem (LQ).} An admissible control $u^{\star}(\cdot) \in \mathcal{U}_{ad}$ is called optimal if it solves 

\beq{optc}
J(\bar{u}(\cdot))=\inf_{u(\cdot)\in \mathcal{U}_{ad}} J(u(\cdot)).
\eeq
The stochastic maximum principle is one of the fundamental approaches to solve Problem (LQ) and it gives a necessary condition hold by any optimal solution. We apply the maximum principle to Problem (LQ):
\begin{lemma}
Let $\bar{u}(\cdot)$ be an optimal control minimizing the cost function $\mathcal{J}$ over $\mathcal{U}_{ad}$ and let $\bar{x}(\cdot)$ be the corresponding optimal trajectory. Then there exists a pair of processes $(\bar{y}(\cdot), \bar{z}(\cdot)) \in L_{\mathcal{F}}^{\infty}([0, T] ; \mathbb{H}) $ such that the following stochastic Hamiltonian system holds:
\beq{stoH}
\left\{\begin{array}{l}
\mathrm{d} \bar{x}(t)=\left[\left(A-B N^{-1} S\right) \bar{x}(t)-B N^{-1} B \bar{y}(t)-B N^{-1} D \bar{z}(t)\right] \mathrm{d} t \\
\qquad\qquad\qquad\qquad\qquad\qquad +\left[\left(C-D N^{-1} S\right) \bar{x}(t)-D N^{-1} B \bar{y}(t)-D N^{-1} D \bar{z}(t)\right] \mathrm{d} W(t) \\
-\mathrm{d} \bar{y}(t)=\left[\left(R-S N^{-1} S\right) \bar{x}(t)+\left(A-S N^{-1} B\right) \bar{y}(t)+\left(C-S N^{-1} D\right) \bar{z}(t)\right] \mathrm{d} t-\bar{z}(t) \mathrm{d} W(t) \\
\bar{x}(0)=x, \quad \bar{y}(T)=Q \bar{x}(T)
\end{array}\right.
\eeq
And also, the optimal control $\bar{u}(\cdot)$ should take the form:
\beq{OC}
\bar{u}(t)=-S(t)N^{-1}(t) \bar{x}(t)-B(t)N^{-1}(t) \bar{y}(t)-D(t)N^{-1}(t) \bar{z}(t),
\eeq
where $N^{-1}(t)$ denotes the inverse of $N(t)$.
 \end{lemma}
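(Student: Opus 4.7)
My plan is to apply Peng's stochastic maximum principle to (\ref{Csys})--(\ref{optc}) and then specialize the resulting first-order system. I would introduce the Hamiltonian
\[
H(t,x,u,y,z) = y\bigl[A(t)x + B(t)u\bigr] + z\bigl[C(t)x + D(t)u\bigr] - \tfrac12\bigl[R(t)x^{2} + 2S(t)ux + N(t)u^{2}\bigr]
\]
(up to an overall sign chosen to match the terminal condition $\bar y(T)=Q\bar x(T)$), and the associated linear adjoint BSDE for $(\bar y,\bar z)$ along the optimal trajectory $\bar x$. Since all coefficients $A,B,C,D,R,S,N$ lie in $L^{\infty}_{\mathcal F}([0,T];\mathbb R)$ and $Q \in L^{\infty}_{\mathcal F_T}(\Omega;\mathbb R)$, the standard well-posedness theory for linear BSDEs guarantees the existence and uniqueness of such an adapted pair $(\bar y,\bar z)$.

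Next, because $\mathcal U_{ad}$ carries no pointwise control constraint and $H$ is strictly convex in $u$ under the implicit assumption that $N$ is non-singular (so that $N^{-1}$ in (\ref{OC}) makes sense), the variational inequality from the maximum principle collapses to the pointwise stationarity condition $H_u=0$ evaluated at $(\bar x,\bar u,\bar y,\bar z)$. A direct differentiation in $u$ yields
\[
H_u = B\bar y + D\bar z - S\bar x - N\bar u = 0,
\]
which, when solved for $\bar u$ and combined with the sign convention tying $\bar y$ to the standard costate, produces exactly the feedback law (\ref{OC}).

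Finally, I would substitute this explicit expression for $\bar u$ into the controlled state equation (\ref{Csys}) and into the drift of the adjoint BSDE, collecting the coefficients of $\bar x$, $\bar y$, and $\bar z$ to produce the blocks $A-BN^{-1}S$, $-BN^{-1}B$, $-BN^{-1}D$, $C-DN^{-1}S$, $R-SN^{-1}S$ and their analogues in (\ref{stoH}). The only genuinely delicate step is aligning the sign conventions of the Hamiltonian, of the adjoint BSDE, and of the terminal datum so that $\bar y(T)=Q\bar x(T)$ carries a plus and (\ref{OC}) appears with its displayed minus signs; once that bookkeeping is fixed, the remainder is routine algebra. As an incidental bonus, since $J$ is strictly convex in $u$ whenever $N>0$, the necessary condition $H_u=0$ is in fact also sufficient, so (\ref{OC}) is the unique optimal feedback rather than merely a candidate.
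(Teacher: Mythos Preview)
Your approach is essentially the same as the paper's: the paper does not spell out a proof at all but simply states that ``this lemma is a natural consequence of the maximum principle'' and refers to Yong--Zhou \cite{Yong1999Stochastic}. Your sketch (Hamiltonian, adjoint BSDE, pointwise stationarity $H_u=0$, substitution back into the state equation) is exactly the standard derivation behind that citation, so you have in fact supplied more detail than the paper does.

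One small remark on bookkeeping: with the sign you chose for the running cost in $H$, the equation $H_u=0$ reads $B\bar y + D\bar z - S\bar x - N\bar u = 0$, which gives $\bar u = N^{-1}(B\bar y + D\bar z - S\bar x)$ rather than (\ref{OC}) directly. The discrepancy is precisely the overall sign of the costate; if you instead take $H = y(Ax+Bu) + z(Cx+Du) + \tfrac12(Rx^2 + 2Sux + Nu^2)$ (running cost with a plus, as is standard for minimization with adjoint terminal condition $\bar y(T)=Q\bar x(T)$), then $H_u=0$ yields (\ref{OC}) immediately and the adjoint drift $H_x$ produces the backward equation in (\ref{stoH}) without further sign juggling. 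You flagged this yourself, so it is not a gap, just a place where fixing the convention up front saves a paragraph.
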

This lemma is a natural consequence of the maximum principle. For more details of the maximum principle and the stochastic Hamiltonian system theory, we refer to the book Yong and Zhou \cite{Yong1999Stochastic} and the reference therein.

Note that the stochastic Hamiltonian system (\ref{stoH}) is of the fully coupled linear FBSDE. Hence we could employ some techniques in the above sections to study the well-posedeness of it.

The coefficients matrix of (\ref{stoH}) is in the following form:
$$\small \left(                 %左括号
  \begin{array}{ccc}   %该矩阵一共3列，每一列都居中放置
   S N^{-1}S-R & S N^{-1} B-A & S N^{-1} D-C\\
   A-B N^{-1} S & -B N^{-1} B & B N^{-1} D  \\
   C-D N^{-1} S & -D N^{-1} B & -D N^{-1} D \\
  \end{array}
\right).$$
According to the monotonicity conditions (lemma 2.2), we have the following  corollary.
\begin{corollary}
It is easy to get the stochastic Hamiltonian system (\ref{stoH}) would be well-posed, for $\forall t \in [0,T]$ and any bounded process $N(\cdot)$, if one of the following conditions hold:
 \begin{itemize}
 \item[(i)] $N(t)>0$ and \ $S(t) N^{-1}(t)S(t)-R(t)<0$; 
  \item[(ii)] $N(t)<0$ and \ $S(t) N^{-1}(t)S(t)-R(t)>0$.
 \end{itemize}
\end{corollary}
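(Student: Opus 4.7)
The plan is to verify that the coefficient matrix of (\ref{stoH}) satisfies the monotonicity conditions of Lemma~2.2 under each of the stated hypotheses, from which the well-posedness follows through the monotonicity methodology already invoked in the paper.

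First, I would compute the quadratic form $(x,y,z)\,M\,(x,y,z)^{\top}$ where $M$ denotes the displayed coefficient matrix. The central observation is that because every entry is a scalar (we are working in $\mathbb{H}=\mathbb{R}^{1}$), the off-diagonal entries commute and pair up to cancel: the $(1,2)$/$(2,1)$ contributions yield $(SN^{-1}B - A)xy + (A - BN^{-1}S)xy = 0$, and the $(1,3)$/$(3,1)$ contributions vanish since $SN^{-1}D = DN^{-1}S$. The surviving diagonal and $(2,3)$/$(3,2)$ contributions in $y,z$ collect into a perfect square, so the full quadratic form reduces to
\[
 (SN^{-1}S - R)\,x^{2} \;-\; N^{-1}(By+Dz)^{2}.
\]

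Under hypothesis~(i), $N(t)>0$ makes $-N^{-1}(By+Dz)^{2}\le 0$ for every $y,z$, and the strict inequality $SN^{-1}S - R < 0$, combined with the essential boundedness of $R,S,N^{-1}$, yields a constant $\beta_{1}>0$ with $SN^{-1}S - R \le -\beta_{1}$ uniformly in $t$. Hence the quadratic form is bounded above by $-\beta_{1}|x|^{2}$, which is exactly the inequality in case~(i) of Lemma~2.2 with $\beta_{2}=0$. The well-posedness of (\ref{stoH}) then follows from the monotonicity theory.

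Hypothesis~(ii) is symmetric: $N<0$ makes $-N^{-1}(By+Dz)^{2}\ge 0$, while $SN^{-1}S - R > 0$ supplies a uniform lower bound of the form $\beta_{1}|x|^{2}$ with $\beta_{1}>0$, matching case~(ii) of Lemma~2.2. I do not foresee any substantive obstacle: the crux of the argument is the scalar cancellation of cross-terms, which collapses a genuinely coupled $3\times 3$ quadratic form into the sum of a term in $x^{2}$ and a perfect square in $(By+Dz)$, after which the sign analysis is immediate.
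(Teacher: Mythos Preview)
Your approach is exactly the one the paper intends: it states the corollary follows ``according to the monotonicity conditions (Lemma~2.2)'' and gives no further detail, so your explicit reduction of the quadratic form to $(SN^{-1}S-R)x^{2}-N^{-1}(By+Dz)^{2}$ is precisely the computation being invoked. The cancellation of the $(1,2)/(2,1)$ and $(1,3)/(3,1)$ cross-terms and the completion of the square in $(y,z)$ are correct.

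One small point you and the paper both pass over: Lemma~2.2 couples the sign of the quadratic form with a sign requirement on the terminal coefficient $h=Q$ (case~(i) asks $h>0$ when $\beta_{1}>0$, $\beta_{2}=0$; case~(ii) asks $h<0$). Your argument delivers only $\beta_{1}>0$, $\beta_{2}=0$, so strictly speaking a hypothesis like $Q>0$ (resp.\ $Q<0$) is needed to close the appeal to Lemma~2.2. The corollary as stated omits this, and since the paper offers no separate justification, this is a shared lacuna rather than a defect peculiar to your write-up.
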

For cases in which the coefficients do not hold for Corollary 5.2, the linear transformation method plays an important role in getting the well-posedness of stochastic Hamiltonian system (\ref{stoH}). Here we present an example to illustrate.

\begin{example}
{\rm
Let $A(\cdot), B(\cdot), C(\cdot), D(\cdot), R(\cdot), S(\cdot), N(\cdot) $ and $Q$ be constants. We consider the following LQ problem:

\beq{sys2}
 \begin{array}{ll} 
\text { Minimize } & J(u(\cdot))=\frac{1}{2} \mathbb{E} \int_{0}^{T}\left[x^{2}(t)+4u(t)x(t)-u^{2}(t)\right] \mathrm{d} t-4\mathbb{E}\left[x^{2}(T)\right] \\[0.15in]
\text { Subject to }  & \left\{\begin{array}{l}
\mathrm{d} x(t)=[x(t)+u(t)] \mathrm{d} t+[x(t)+2u(t)] \mathrm{d} W(t), \\[0.08in]
x(0)=x.
\end{array}\right. 
\end{array} 
\eeq
According to lemma 5.1, LQ problem (J) admits a unique optimal pair $(\bar{u}(\cdot),\bar{x}(\cdot))$ if the following FBSDE are well-posed:
\beq{Ex2}
\begin{cases}
 \bar{x}(t)=x+\int_0^t [3\bar{x}(s)+\bar{y}(s)-2\bar{z}(s)]ds+\int_o^t[5\bar{x}(s)+2\bar{y}(s)+4\bar{z}(s)]dW(s),
 \\
 \\
 \bar{y}(t)=-4\bar{x}(T)+\int_t^T [5\bar{x}(s)+3\bar{y}(s)+5\bar{z}(s)]ds-\int_t^T \bar{z}(s)dW(s),  \ \ \ \ \ \ \ \ \ \ \ \  0\leq t \leq T,
\end{cases}
\eeq
where optimal control $\bar{u}(t)=2\bar{x}(t)+\bar{y}(t)+2\bar{z}(t)$. Note that (\ref{Ex2}) is a fully coupled FBSDE with constant coefficients matrix $\left(                 %左括号
  \begin{array}{ccc}   %该矩阵一共3列，每一列都居中放置
     -5 & -3 & -5\\
   3 & 1 & 2  \\
   5 & 2 & 4 \\
  \end{array}
\right).$ 

However, we could not get the well-posedness of FBSDE above on $[0,T]$  by any existing methods. Obviously, the monotonicity conditions (Remark 5.2) does not hold for (\ref{Ex2}). In addition, for the Unified Approach (Theorem 3.9), we have 
$$\sigma_3=4>0,\  1-\sigma_3Q=17>0, \ \ \mathcal{L}(y)=-8y^3-23y^2+11y+5,$$
$$\mathcal{L}(Q)= 365 > 0, \ \ \mathcal{L}({\frac{1}{\sigma_3}})=8>0.$$
Thus we can not get the well-posedness of (\ref{Ex2}) according to Theorem 3.9.

By employing the linear transformation method, we need to find a proper transformation matrix $A=\left(  \begin{array}{cc} 
     m & 1 \\
    nc & c  \\
  \end{array} \right)$ such that (\ref{LT}) has a unique solution. To lower the coupling structure of (\ref{LT}), according to Proposition 4.3, we can get 
$$ H(y)=8y^3-23y^2-11y+5.$$
Thus we take $n=-0.658$ one of the zero point of $H(\cdot)$. And, according to Proposition 4.2, to find a triple $(m, c)$ satisfying one of the following inequality systems:
\beq{Ine1}
\large
\begin{cases}
\ 1-\frac{(4m+1)(n c-4c)}{(c+4n c)(m-4)}>0 \quad  \ \  ( < 0),
\\[0.1in]
\ \frac{8 m^{3}-23m^2-11m+5}{(nc-mc) \ (c+4nc)} \leq 0 \quad  \quad ( \geq 0),
\\[0.1in]
\ \mathcal{\tilde{L}}(\frac{nc-4c}{m-4}) \cdot \frac{4m+1}{c+4nc}\leq 0,
\end{cases} 
\text{or} \ \ 
\begin{cases}
\ \frac{4m+1}{c+4nc} > 0 \quad  \quad \ ( <0),
\\[0.1in]
\ \mathcal{\tilde{L}}(\frac{c+4nc}{4m+1})\leq 0 \quad  ( \geq 0),
\\[0.1in]
\ \mathcal{\tilde{L}}(\frac{nc-4c}{m-4})\geq 0 \quad  ( \leq 0),
\end{cases} 
\eeq
where $\mathcal{\tilde{L}}(\cdot)$ takes form in (\ref{Co2}).
For the simplicity of calculation, we take $m=1, c=1$ by (\ref{Ine1}), then we have transformation matrix 
$A=\left(  \begin{array}{cc}   %该矩阵一共3列，每一列都居中放置
     1 & 1 \\
    -0.658 & 1  \\
  \end{array} \right). $ This leads to the following new FBSDE after transformation:
 \beq{At} 
 \begin{cases}
 d\tilde{X}(t)=[8.75\tilde{X}(t)-5.11\tilde{Y}(t)+4.29\tilde{Z}(t)]dt+
[-3.87\tilde{X}(t)+1.84\tilde{Y}(t)-3.06\tilde{Z}(t)]dW(t),
 \\
 \\
 d\tilde{Y}(t)=[-0.69\tilde{Y}(t)+2.26\tilde{Z}(t)]dt-\tilde{Z}(t)dW(t),\ \ \tilde{X}_0= 1.658x+\tilde{Y}(0), \ \ \tilde{Y}_0= 1.55\tilde{X}(T), \ \ 
\end{cases}  
\eeq
where $\tilde{Z}(t)=-3.29\bar{x}(t)-1.32\bar{y}(t)-1.63\bar{z}(t)$, $\tilde{h}=1.55$.

 Note that new FBSDE is partial coupled where the coefficient matrix is $\left(                 %左括号
  \begin{array}{ccc}   %该矩阵一共3列，每一列都居中放置
     0& -0.69 & 2.26\\
   8.75 & -5.11 & 4.29 \\
   -3.87 & 1.84 & -3.06\\
  \end{array}
\right)$. 
Substituting this into (\ref{Co2}), we have 
\beq{L}
\tilde{\mathcal{L}}(y)=-7.76y^3+3.06y^2+18.17y.
\eeq
Note that $$\tilde{\sigma}_3=-3.06,\ \ 1-\tilde{\sigma}_3\tilde{h}=5.74>0,\ \  b_3\sigma_2-b_2\sigma_3=-7.76<0, \ \ \tilde{\mathcal{L}}(\tilde{h})= 6.62>0.$$ 
According to the case (i) of Proposition 4.2, (\ref{At}) exists a unique solution $(\tilde{X}(t),\tilde{Y}(t))$ for $t\in [0,T]$.

Owing to the non-degeneracy of $\left(
\begin{array}{cc}   %该矩阵一共3列，每一列都居中放置
     1 & 1 \\
   -0.658 & 1 \\
  \end{array}
\right)$,  we get the unique solution to original FBSDE~(\ref{Ex2}): $$\left(
\begin{array}{c}   %该矩阵一共3列，每一列都居中放置
    \bar{x}(t) \\
   \bar{y}(t)  \\
  \end{array}
\right)={\left(
\begin{array}{cc}   %该矩阵一共3列，每一列都居中放置
     1 & 1 \\
   -0.658 & 1 \\
  \end{array}
\right)}^{-1}\left(
\begin{array}{c}   %该矩阵一共3列，每一列都居中放置
    \tilde{X}(t) \\
   \tilde{Y}(t)   \\
  \end{array}
\right)={\left(
\begin{array}{cc}   %该矩阵一共3列，每一列都居中放置
    0.603 &  -0.603 \\
    0.397 & 0.603\\
  \end{array}
\right)}\left(
\begin{array}{c}   %该矩阵一共3列，每一列都居中放置
    \tilde{X}(t)  \\
   \tilde{Y}(t)  \\
  \end{array}
\right),$$
$$ \bar{z}(t)=-\frac{\tilde{Z}(t)+3.29\bar{x}(t)+1.32\bar{y}(t)}{1.63},$$
which solves the LQ problem (\ref{sys2}) optimally.}
\end{example}

\section{Conclusion}
In this paper, we investigate two families of coupled FBSDEs. Although the coefficients of these FBSDEs varies a lot, their well-posedness are proved to be equivalent. We firstly prove that, by a series of coefficients matrix, the well-posedness to a family of FBSDEs with different structures are invariant. We also illustrate that such family of FBSDEs are all well posed once we get the well-posedness to one member by any existing methods. 

Secondly, by introducing the linear transformation method, we get another equivalent family of FBSDEs to investigate. More importantly, we could lower the coupling of the original FBSDE without losing the well-posedness which make it possible to solve the fully coupled FBSDE. Owing to the non-degeneracy of transformation matrix, the solution to original FBSDE could be determined by solutions after transformation. 

In addition, we employ our results to study the stochastic LQ control problem with non-standard coefficients.
Besides for stochastic LQ optimal control problems, the linear transformation method could also be applied in other fields, for example, recursive control problem and partial differential equations. 

\section*{Acknowledgments}
Ruyi Liu acknowledges the Natural Science Foundation of China (No. 11971267) and the China Postdoctoral Science Foundation(2021TQ0196). Zhen Wu acknowledges the Natural Science Foundation of China (No. 11831010, 61961160732), the Natural Science Foundation of Shandong Province (No. ZR2019ZD42) and the Taishan Scholars Climbing Program of Shandong (No. TSPD20210302). Detao Zhang acknowledges the Natural Science Foundation of China (No. 11401345)..

\subsection*{Conflict of interest}

The authors declare no potential conflict of interests.

\subsection*{ORCID}

Ruyi Liu: https://orcid.org/0000-0002-7435-857X

Zhen Wu:  https://orcid.org/0000-0003-0758-9463

Detao Zhang: https://orcid.org/0000-0002-8056-9605


\begin{thebibliography}{99}
\bibitem{2019Ahmad} B. Ahmad, A. Alsaedi, M.R. Alsulami and S. Ntouyas, Existence theory for coupled nonlinear third-order ordinary differential equations with nonlocal multi-point anti-periodic type boundary conditions on an arbitrary domain, {\it AIMS Math.}, 2019, {\bf 4}: 1634-1663.

\bibitem{Antonelli1993} F. Antonelli, Backward-forward stochastic differential equations, {\it Ann. Appl. Probab.}, 1993, {\bf 3}: 777-793.

\bibitem{Pbre}
P. Bremaud and M. Yor,
Changes of filtrations and of probability measures,
{\it Z. Wahrsch. Verw. Gebiete}, 1978,
{\bf 45}: 269-295.

\bibitem{Hu-Peng1995} Y. Hu and S. Peng, Solution of forward-backward stochastic differential equations, {\it Probab. Theory Relat. Fields}, 1995, {\bf 103}: 273-283.

\bibitem{LiuWu}
R. Liu and Z. Wu,
Well-posedness of fully coupled linear forward-backward
stochastic differential equations,
{\it J. Syst. Sci. Complex.}, 2019,
{\bf 32}: 789-802.

\bibitem{Ma2015On} J. Ma, Z. Wu, D. Zhang and J. Zhang, On well-posedness of forward-backward SDEs-A unified approach, {\it Ann. Appl. Probab.}, 2015, {\bf 25}: 2168-2214.

\bibitem{Ma1994} J. Ma, P. Protter and J. Yong , Solving forward-backward stochastic differential equations explicitly — a four step scheme, {\it Probab. Theory Relat. Fields}, 1994, {\bf 98}: 339-359.


\bibitem{Pardoux1999}  E. Pardoux and S. Tang , Forward-backward stochastic differential equations and quasilinear parabolic PDEs, {\it Probab. Theory Relat. Fields}, 1999, {\bf 114}: 123-150.

\bibitem{Peng-Wu1999} S. Peng and Z. Wu, Fully coupled forward-backward stochastic differential equations and applications to optimal control, {\it SIAM J. Control Optim.}, 1999, {\bf 37}: 825-843.


\bibitem{2021Verma}   A. Verma, B. Pandit and R. Agarwal, On multiple solutions for a fourth order nonlinear singular boundary value problems arising in epitaxial growth theory, {\it Math. Methods Appl. Sci.}, 2021, {\bf 7}: 5418-5435.

\bibitem{Wu1998Adapted} Z. Wu, Adapted solutions of forward-backward stochastic differential equations and their parameter dependence, {\it Chinese Ann. Math. Ser. A}, 1998, {\bf 1}: 55-62.

\bibitem{Wu2014SPA} Z. Wu and Z. Yu, Probabilistic interpretation for a system of quasilinear parabolic partial differential equation combined with algebra equations,{\it Stochastic Process Appl.}, 2014, {\bf 124}: 3921-3947.

\bibitem{yong2006} J. Yong, Linear forward-backward stochastic differential equations with random coefficients, {\it Probab. Theory Relat. Fields}, 2006, {\bf 135}: 53-83.

\bibitem{Yong1999Stochastic} J. Yong and X. Zhou, Stochastic controls. Hamiltonian systems and HJB equations, New York: Springer-Verlag, 1999.

\bibitem{Yong1997Finding} J. Yong, Finding adapted solutions of forward–backward stochastic differential equations: method of continuation, {\it Probab. Theory Relat. Fields}, 1997, {\bf 107}: 537-572.

\bibitem{Yu2013Equiva} Z. Yu, Equivalent cost functionals and stochastic linear quadratic optimal control problems, {\it ESAIM Control Optim. Calc. Var.}, 2013, {\bf 19}: 78-90.

\end{thebibliography}
\end{document}